\theoremstyle{plain}
\newtheorem{thm}{Theorem}[section]
\newtheorem{cor}[thm]{Corollary}
\newtheorem{lem}[thm]{Lemma}
\newtheorem{note}{Note}[section]
\theoremstyle{definition}
\newtheorem{defn}{Definition}[section]
\newtheorem{rem}{Remark}[section]
\begin{document}

\setcounter {page}{1}
\title{On strong $\mathcal{A}$-statistical convergence in probabilistic metric spaces}
\author[ P. Malik, S. Das ]{Prasanta Malik*, and Samiran Das*\ }
\newcommand{\acr}{\newline\indent}
\maketitle
\address{{*\,} Department of Mathematics, The University of Burdwan, Golapbag, Burdwan-713104,
West Bengal, India.
                Email: pmjupm@yahoo.co.in, das91samiran@gmail.com \acr
           }

\maketitle
\begin{abstract}
In this paper we study some basic properties of strong
$\mathcal{A}$-statistical convergence and strong
$\mathcal{A}$-statistical Cauchyness of sequences in probabilistic
metric spaces not done earlier. We also study some basic
properties of strong $\mathcal{A}$-statistical limit points and
strong $\mathcal{A}$-statistical cluster points of a sequence in a
probabilistic metric space. Further we also introduce the notion of
strong statistically $\mathcal{A}$-summable sequence in a
probabilistic metric space and study its relationship with strong
$\mathcal{A}$-statistical convergence.
\end{abstract}

\author{}
\maketitle { Key words and phrases: probabilistic metric space, strong $\mathcal{A}$-statistical convergence,
strong $\mathcal{A}$-statistical Cauchy sequence, strong statistically $\mathcal{A}$-summable sequence,
strong $\mathcal{A}$-statistical limit point, strong $\mathcal{A}$-statistical cluster point.} \\

\textbf {AMS subject classification (2010) : 54E70, 40C05}.  \\

\section{\textbf{Introduction and background:}}
The concept of probabilistic metric (PM) was introduced by K.
Menger \cite{Me} under the name of ``statistical metric" by
considering the distance between two points $a$ and $b$ as a
distribution function $\mathcal{F}_{ab}$ instead of a non-negative
real number and the value of the function $\mathcal{F}_{ab}$ at
any $t> 0 $ i.e. $\mathcal{F}_{ab} (t) $ is interpreted as the
probability that the distance between the points $a$ and $b$ is
less than $t$. After Menger, a through development by Schwiezer
and Sklar \cite{Sh1, Sh2, Sh3, Sh4}, Tardiff \cite{Tar}, Thorp
\cite{Th} and many others brought this theory of probabilistic
metric to its present state. A detailed study on probabilistic
metric spaces can be found in the famous book of Schwiezer and
Sklar \cite{Sh5}. Several topologies are defined on a PM space,
but strong topology is one of them, which received most attention
to date and it is the main tool of our paper.

The notion of statistical convergence was introduced as a
generalization to the usual notion of convergence of real
sequences independently by Fast \cite{Fa} and Schoenberg \cite{Sc}
using the concept of natural density of subsets of $\mathbb{N}$,
the set of all nonzero positive integers. A set
$\mathcal{K}\subset\mathbb{N}$ has natural density
$d(\mathcal{K})$ if
$$d(\mathcal{K})=\lim\limits_{n\rightarrow \infty}\frac{\left|\mathcal{K}(n)\right|}{n}$$
where $\mathcal{K}(n)=\left\{j\in \mathcal{K}:j\leq n\right\}$ and
$\left|\mathcal{K}(n)\right|$ represents the number of elements in
$\mathcal{K}(n)$. Note that one can write
$d(\mathcal{K})=\lim\limits_{n\rightarrow \infty}(C_1
\chi_{_\mathcal K})_n$, where $\chi_{_\mathcal{K}}$ is the
characteristic function of $\mathcal{K}$ and $C_1$ is the Cesaro
matrix i.e. $C_1 = (a_{nk})$ is an $\mathbb{N} \times \mathbb{N}$
matrix given by $a_{nk} = \frac{1}{n}$ if $ n \geq k$ and $a_{nk}
= 0$ if $ n < k$.

A sequence $x=\{x_k\}_{k\in\mathbb{N}}$ of real numbers is called
statistically convergent to $\mathcal{L}\in\mathbb{R}$ if for
every
$\varepsilon>0,~d(\{k\in\mathbb{N}:\left|x_k-\mathcal{L}\right|\geq\varepsilon\})=0$.

Study in this line turned out to be one of the active research
area in summability theory after the works of Salat \cite{Sa} and
Fridy \cite{Fr1}. For more works in this direction one can see
\cite{Fr2, Pe, St}.

The notion of statistical convergence was further generalized to
$\mathcal{I}$-convergence by Kostyrko et al. \cite{Ko1} based on
the notion of an ideal $\mathcal I$ of subsets of $\mathbb{N}$.

A non-empty family $\mathcal I$ of subsets of a non empty set $S$
is called an ideal in $S$ if $\mathcal I$ is hereditary ( i.e. $ A
\in \mathcal{I},~ B\subset A \Rightarrow B\in \mathcal{I}$ ) and
additive ( i.e. $ A,B \in \mathcal{I} \Rightarrow A \cup B\in
\mathcal{I}$).

An ideal $\mathcal{I}$ in a non-empty set $S$ is called
non-trivial if $S\notin \mathcal{I}$ and $\mathcal{I} \neq
\{\emptyset\}$.

A non-trivial ideal $\mathcal{I}$ in $S(\neq \emptyset)$ is called
admissible if $\{z\} \in \mathcal{I}$ for each $z \in S$.


Let $\mathcal I$ be an admissible ideal in $\mathbb{N}$. A
sequence $x=\{x_k\}_{k \in \mathbb{N}}$ of real numbers is said to
be $\mathcal{I}$-convergent to $\xi$ if for any $\varepsilon
>0,~ \{ k\in\mathbb{N} : \left| x_k - \xi \right| \geq  \varepsilon\} \in \mathcal{I}.$
In this case we write $
\mathcal{I}\mbox{-}\lim\limits_{k\rightarrow \infty}x_k = \xi$.

More works in this line can be seen in \cite{De2, Ko2, La1, La2}
and many others.

In 1981, Freedman and Sember \cite{Fd} generalized the concept of
natural density to the notion of $\mathcal A$-density by replacing
the Cesaro matrix $C_1$ with an arbitrary nonnegative regular
summability matrix $\mathcal A$. An $\mathbb{N} \times \mathbb{N}$
matrix $\mathcal A = (a_{nk}),~ a_{nk} \in \mathbb{R}$ is said to
be a regular summability matrix if for any convergent sequence
$x=\{x_k\}_{k\in\mathbb N}$ with limit $\xi$, $
\displaystyle{\lim_{n \rightarrow \infty}} \displaystyle{\sum_{ k
= 1}^{\infty}} a_{nk}x_k = \xi$, and $\mathcal A$ is called
nonnegative if $ a_{nk} \geq 0, \forall n, k $. The well-known
Silvermann- Toepliz's theorem asserts that an $\mathbb{N} \times
\mathbb{N}$ matrix $\mathcal A = (a_{nk}),~ a_{nk} \in \mathbb{R}$
is regular if and only if the following three conditions are
satisfied:

\begin{enumerate}[(i)]
\item
$\left\|\mathcal{A}\right\|=\sup\limits_{n}\sum\limits_{k}|a_{nk}|<\infty$,
\item $\lim\limits_{n\rightarrow\infty}a_{nk}=0$ for each $k$,
\item $\lim\limits_{n\rightarrow\infty}\sum\limits_{k}a_{nk}=1$.
\end{enumerate}

Throughout the paper we take $\mathcal A = (a_{nk})$ as an
$\mathbb{N} \times \mathbb{N}$ non negative regular summability
matrix.

For a non negative regular summability matrix $ \mathcal A =
(a_{nk})$, a set $ \mathcal{B} \subset \mathbb{N}$ is said to have
$\mathcal A$-density $\delta_{\mathcal A} (\mathcal{B})$, if
$$\delta_{\mathcal A} (\mathcal{B})=\lim\limits_{n\rightarrow \infty}\displaystyle{\sum_{ k \in \mathcal{B}}} a_{nk}.$$
It is clear that if $B,C\subset \mathbb{N}$ with
$\delta_\mathcal{A}(B)=0$ and $\delta_\mathcal{A}(C)=0$ then
$\delta_\mathcal{A}(B\cup C)=0$. Also $\delta_\mathcal{A}(B^c)=1$.
Note that for the Cesaro matrix $C_1$, $C_1$-density become
natural density.

If a real sequence $x=\{x_k\}_{k\in \mathbb{N}}$ satisfies a
property $\mathcal{P}$ for each $k$ except for a set of
$\mathcal{A}$-density zero, then we say $x$ satisfies the property
$\mathcal{P}$ for ``almost all $k(\mathcal{A})$'' and we write it
in short as ``$a.a.k(\mathcal{A})$''.

Using the notion of $\mathcal A$-density, the notion of
statistical convergence was extended to the notion of $\mathcal
A$-statistical convergence by Kolk \cite{Kl1}, which included the
ideas of statistical convergence \cite {Fa, Sc},
$\lambda$-statistical convergence \cite {Mu1} or lacunary
statistical convergence \cite {Fr4} as special cases.

A sequence $x=\{x_k\}_{k \in \mathbb{N}}$ of real numbers is said
to be $\mathcal A$-statistically convergent to $\xi $ if for every
$\epsilon
> 0$, $\delta_{\mathcal A}(B(\epsilon)) = 0$, where $B(\epsilon) = \{ k\in\mathbb{N} : | x_k - \xi| \geq  \epsilon\}$.
In this case we write
$st_{\mathcal{A}}\mbox{-}\lim\limits_{k\rightarrow \infty}x_k =
L$.

A sequence $x=\{x_k\}_{k\in\mathbb{N}}$ of real numbers is said to
be $\mathcal{A}$-statistically Cauchy if for every $\gamma>0$,
there exists a natural number $k_0$ such that
$$\delta_\mathcal{A}(\{k\in\mathbb{N}: |x_k-x_{k_0}|\geq\gamma\})=0.$$

Using this notion of $A$-statistical convergence, the concepts of
statistical limit point and statistical cluster point \cite{Fr2}
of real sequences were extended to the notions of $A$-statistical
limit point and $A$-statistical cluster point by Connor et al.
\cite{Co4}.

If $\{x\}_\mathcal{Q}$ is a subsequence of a sequence
$x=\{x_k\}_{k\in\mathbb{N}}$ and $\delta_{\mathcal A}(\mathcal{Q})
= 0$, then $\{x\}_\mathcal{Q}$ is called an $\mathcal A$-thin
subsequence of $x$. On the other hand $\{x\}_\mathcal{Q}$ is
called an $\mathcal A$-nonthin subsequence of $x$ if
$\delta_{\mathcal A}(\mathcal{Q}) \neq 0$, where $\delta_{\mathcal
A}(\mathcal{Q}) \neq 0$ means that either $\delta_{\mathcal
A}(\mathcal{Q})$ is a positive number or $\mathcal{Q}$ fails to
have $\mathcal A$-density.

A real number $p$ is called an $\mathcal A$-statistical limit
point of a real sequence $x=\{x_k\}_{k\in\mathbb{N}}$, if there
exists an $\mathcal A$-non-thin subsequence of $x$ that converges
to $p$.

A real number $q$ is called an $\mathcal A$-statistical cluster
point of a real sequence $x=\{x_k\}_{k\in\mathbb N}$, if for every
$\epsilon>0$ the set $\{k \in\mathbb{N}: \left|x_k - q \right|
<\epsilon\}$ does not have $\mathcal A$-density zero.

If $\Lambda_x^\mathcal{A}$, $\Gamma_x^\mathcal{A}$ and $L_x$
denote the set of all $\mathcal{A}$-statistical limit points, the
set of all $\mathcal{A}$-statistical cluster points and the set of
all ordinary limit points of $x$, then clearly
$\Lambda_x^\mathcal{A}\subset\Gamma_x^\mathcal{A}\subset L_x$

More primary works on this convergence can be found in \cite{De1,
De3, Gu1}, where many more references are mentioned.

Because of immense importance of probabilistic metric space in
applied mathematics, the notion of statistical convergence \cite
{Fa, Sc} and the notion of $I$-convergence \cite{Ko1} were
extended to the setting of sequences in a PM space endowed with
the strong topology by \c{S}en\c{c}imen et al. in \cite{Se} and
\cite{Se2} respectively. Following Kostyrko et al. \cite{Ko1},
Bartoszewicz et al. \cite{Br} and \c{S}en\c{c}imen et al
\cite{Se2}, if we take the admissible ideal $\mathcal I$ of
subsets of $\mathbb{N}$ given by $ \mathcal I =
\mathcal{I}_{\mathcal A} = \{ B \subset \mathbb{N}:
\delta_{\mathcal A} (B) = 0\}$, where $\mathcal A = (a_{nk})$ is
an $\mathbb{N} \times \mathbb{N}$ non negative regular summability
matrix, then the notions of strong $\mathcal{I}_{\mathcal
A}$-convergence, strong $\mathcal{I}_{\mathcal A}$-Cauchyness,
strong $\mathcal{I}_{\mathcal A}$-limit point and strong
$\mathcal{I}_{\mathcal A}$-cluster point of sequences in a PM
space become the notions of strong $\mathcal A$-statistical
convergence, strong $\mathcal A$-statistical Cauchyness, strong
$\mathcal A$-statistical limit point and strong $\mathcal
A$-statistical cluster point respectively.

In this paper we study some basic properties of strong
$\mathcal{A}$-statistical convergence, strong
$\mathcal{A}$-statistical Cauchyness, strong
$\mathcal{A}$-statistical limit points and strong
$\mathcal{A}$-statistical cluster points of a sequence in a
probabilistic metric space not done earlier. Also in section 5
introducing the notion of a strong statistically
$\mathcal{A}$-summable sequence in a probabilistic metric space we
study its basic properties including its relationship with strong
$\mathcal{A}$-statistical convergence.

\section{\textbf{Basic Definitions and Notations}}
In this section we recall some preliminary concepts and results
related to probabilistic metric (PM) spaces (for more details see
\cite{Sh1, Sh2, Sh3, Sh4, Sh5}).
\begin{defn}\cite{Sh5}
A non decreasing function $f: [-\infty, \infty] \rightarrow [0,1]$
is called a distribution function if $f(-\infty)=0$ and $
f(\infty)=1$.
\end{defn}
We denote the set of all distribution functions that are left
continuous over $(-\infty,\infty)$ by $\mathcal{D}$ and a relation
$\leq$ on $\mathcal{D}$ defined by $f \leq g$ if and only if
$f(a)\leq g(a),~\forall~ a \in [-\infty, \infty]$ is clearly a
partial order relation on $\mathcal{D}$.
\begin{defn}\cite{Sh5}
For any $q\in [-\infty, \infty ]$ the unit step at $q$ is denoted
by $\varepsilon_{q}$ and is defined to be a function in
$\mathcal{D}$ given by
\[ \varepsilon_{q}(x) = \left\{
\begin{array}{l l}
0, & \quad x\in [-\infty,q]\\
1, & \quad x\in (q,+\infty], ~\text{for}~ q\in[-\infty,+\infty),
\end{array} \right.\]\\
\[ \varepsilon_{\infty}(x) = \left\{
\begin{array}{l l}
0, & \quad x\in [-\infty,\infty) \\
1, & \quad x = \infty .
\end{array} \right.\]\\
\end{defn}

\begin{defn}\cite{Sh5}
A sequence $\{f_{n}\}_{n\in \mathbb{N}}$ of distribution functions
is said to converge weakly to a distribution function $f$, if the
sequence $\{f_{n}(x)\}_{n\in\mathbb{N}}$ converges to $f(x)$ at
each continuity point $x$ of $f$. We write
$f_{n}\xrightarrow{w}f$.
\end{defn}

\begin{defn}\cite{Sh5}
If $f,g\in \mathcal{D}$, then the distance $d_{L}(f,g)$ between
$f$ and $g$ is defined as the infimum of all numbers $a\in(0,1]$
such that
\begin{eqnarray*}
&&~~~f(\xi-a)-a\leq g(\xi)\leq f(\xi+a)+a \\
&\text{and} &~g(\xi-a)-a\leq f(\xi)\leq
g(\xi+a)+a,\hspace{0.1in}\text{hold}~\text{for
all}~\xi\in\left(-\frac{1}{a},\frac{1}{a}\right).
\end{eqnarray*}
\end{defn}
Then $(\mathcal{D},d_L)$ forms a metric space with the metric $d_{L}$. Clearly
if $\{f_k\}_{k\in\mathbb{N}}$ is a sequence in $\mathcal{D}$ and
$f\in\mathcal{D}$, then $f_k\xrightarrow{w}f ~\text{if and only if}~ d_{L}(f_k,f)\rightarrow 0$.

\begin{defn}\cite{Sh5}
A non decreasing function $f:[0,\infty]\longrightarrow\mathbb{R}$,
which is left continuous on $(0,\infty)$ is said to be a distance
distribution function if $f(0)=0$ and $ f(\infty)=1$.
\end{defn}
We denote the set consisting of all the distance distribution functions as
$\mathcal{D}^{+}$. Clearly $(\mathcal{D}^{+},d_L)$ is a compact metric space and thus complete.
\begin{thm}\cite{Sh5}\label{thm61}
If $f\in\mathcal{D}^{+}$ then for any $t>0$, $f(t)>1-t$ if and only if $d_{L}(f,\varepsilon_{0})<t$.
\end{thm}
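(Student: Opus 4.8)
This is a classical result relating the Lévy metric $d_L$ on $\mathcal{D}^+$ to the "one-parameter" condition $f(t) > 1-t$; the latter is exactly the condition used to define the strong topology on a PM space. I would prove the two implications separately, unwinding Definition 1.4 (the definition of $d_L$) in each direction.

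For the "if" direction, suppose $d_L(f,\varepsilon_0) < t$. By Definition 1.4, the infimum defining $d_L(f,\varepsilon_0)$ is strictly less than $t$, so there exists $a \in (0,t)$ such that the four defining inequalities hold with $g = \varepsilon_0$; in particular $\varepsilon_0(\xi - a) - a \le f(\xi)$ for all $\xi \in (-1/a, 1/a)$. Now choose $\xi$ slightly larger than $a$, say $\xi = t$ provided $t < 1/a$ — here one has to be a little careful about whether $t$ lies in the admissible range $(-1/a,1/a)$, and I would handle the case $t \ge 1/a$ separately (or simply note that $f(t) > 1-t$ is automatic once $t$ is large, since $f(\infty)=1$ and $f$ is nondecreasing, so $f(t) \ge$ some value, while $1-t$ may be negative). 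For $\xi$ with $a < \xi < 1/a$ we get $\varepsilon_0(\xi-a) = 1$ (since $\xi - a > 0$), hence $f(\xi) \ge 1 - a > 1 - t$. Letting $\xi \downarrow$ an appropriate value and using left-continuity of $f$ on $(0,\infty)$, or just picking $\xi$ close enough to $t$, yields $f(t) \ge 1 - a > 1-t$, giving the strict inequality. The precise bookkeeping of which $\xi$ to plug in, and the interplay with the constraint $\xi \in (-1/a,1/a)$, is the one genuinely fiddly point.

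For the "only if" direction, suppose $f(t) > 1-t$ for some $t > 0$. I want to exhibit, for every $a$ in some interval $(\,f\text{-dependent lower bound},\, t\,)$, that the four inequalities of Definition 1.4 hold with $g = \varepsilon_0$, which will force $d_L(f,\varepsilon_0) \le a < t$ for such $a$, hence $d_L(f,\varepsilon_0) < t$. Since $f \in \mathcal{D}^+$ we have $f \equiv 0$ on $[-\infty,0]$ (extending by $0$ to the left) and $0 \le f \le 1$, so several of the four inequalities are trivial: e.g. $f(\xi - a) - a \le \varepsilon_0(\xi)$ and $\varepsilon_0(\xi - a) - a \le f(\xi)$ need checking only where the step function is $0$, i.e. for $\xi \le a$; there $f(\xi) \ge 0 \ge -a = \varepsilon_0(\xi) - a$ after rearranging, and similarly for the $f$-side using $f \le 1$. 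The substantive inequality is $f(\xi) \le \varepsilon_0(\xi + a) + a$ together with $\varepsilon_0(\xi+a) \le f(\xi+a)+a$: for $\xi + a > 0$ the step is $1$ and nothing to check once $a$ can be taken positive; the delicate range is $0 < \xi \le$ something where we must use $f(t) > 1-t$. Choosing $a$ with $1 - f(t) < a < t$ (possible precisely because $f(t) > 1-t$) and using monotonicity of $f$ to compare $f(\xi)$ with $f(t)$ for $\xi < t$, one checks all inequalities hold on $(-1/a,1/a)$, perhaps after shrinking $a$ so that $t < 1/a$.

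The main obstacle, as indicated, is purely technical: carefully verifying each of the four inequalities in Definition 1.4 over the correct $\xi$-range, handling the boundary between where $\varepsilon_0$ is $0$ versus $1$, and making the right choice of $a$ strictly between $1 - f(t)$ and $t$ while keeping $t$ inside $(-1/a, 1/a)$. There is no conceptual difficulty — it is a direct computation with the Lévy metric — so I would present it compactly, emphasizing the choice $1 - f(t) < a < t$ as the crux and relegating the trivial inequalities to a sentence each.
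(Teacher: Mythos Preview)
The paper does not prove this theorem; it is quoted from Schweizer--Sklar \cite{Sh5} as a preliminary fact with no argument given, so there is nothing in the paper to compare against. Your overall plan --- unwind Definition~2.4 with $g=\varepsilon_0$ and treat the two implications separately --- is the standard route, and your ``if'' direction is essentially correct: once $a\in(0,t)$ witnesses the L\'evy inequalities, plugging $\xi=t$ (which lies in $(-1/a,1/a)$ whenever $t\le 1$, the only nontrivial case) into $\varepsilon_0(\xi-a)-a\le f(\xi)$ gives $f(t)\ge 1-a>1-t$ directly; the appeal to left-continuity you make here is unnecessary.

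The ``only if'' direction, however, has a genuine gap. Your proposed choice $1-f(t)<a<t$ does not guarantee the key inequality $f(\xi)\ge 1-a$ for all $\xi\in(a,1/a)$, which is what both $\varepsilon_0(\xi-a)-a\le f(\xi)$ and $\varepsilon_0(\xi)\le f(\xi+a)+a$ reduce to on the nontrivial range. Concretely, take $f=\varepsilon_{1/2}\in\mathcal{D}^+$ and $t=3/4$: then $f(t)=1>1-t$, and $a=1/4$ satisfies $1-f(t)=0<a<t$, yet for $\xi=0.4\in(a,1/a)$ one has $f(\xi)=0<3/4=1-a$, so this $a$ fails. ``Shrinking $a$'' only enlarges $1-a$ and widens $(a,1/a)$, so it moves in the wrong direction; and ``monotonicity of $f$ to compare $f(\xi)$ with $f(t)$ for $\xi<t$'' gives $f(\xi)\le f(t)$, the wrong inequality. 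What is actually needed is an $a\in(0,t)$ with $f(\xi)\ge 1-a$ for all $\xi>a$, and the existence of such an $a$ uses the \emph{left-continuity} of $f$ at $t$: the function $s\mapsto f(s)+s$ is nondecreasing and left-continuous with value $f(t)+t>1$ at $s=t$, hence there is $a\in(0,t)$ with $f(a)+a>1$, and then monotonicity gives $f(\xi)\ge f(a)>1-a$ for every $\xi\ge a$. You invoke left-continuity in the direction where it is not needed and omit it in the direction where it is essential.
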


\begin{defn}\cite{Sh5}
A triangle function is a binary operation $\tau$ on
$\mathcal{D}^{+}$, which is associative, commutative,
nondecreasing in each place and has $\varepsilon_{0}$ as the
identity element.
\end{defn}
\begin{defn}\cite{Sh5}
A probabilistic metric space, in short PM space, is a triplet
$(X,\mathcal{F},\tau)$, where $X$ is a nonempty set whose elements
are the points of the space, $\mathcal{F}$ is a function from
$X\times X$ into $\mathcal{D}^{+}$, $\tau$ is a triangle function
and the following conditions are satisfied for all $a,b,c\in X$:
    \begin{enumerate}[(P-1).]
    \item $\mathcal{F}(a,a)=\varepsilon_{0}$
    \item $\mathcal{F}(a,b)\neq\varepsilon_{0}$ if $a\neq b$
    \item $\mathcal{F}(a,b)=\mathcal{F}(b,a) $
    \item $\mathcal{F}(a,c)\geq \tau(\mathcal{F}(a,b),\mathcal{F}(b,c))$.
    \end{enumerate}
Henceforth we will denote $\mathcal{F}(a,b)$ by $\mathcal{F}_{ab}$
and its value at $t>0$ by $\mathcal{F}_{ab}(t)$.
\end{defn}

\begin{defn}\cite{Sh5}
Let $(X,\mathcal{F},\tau)$ be a PM space. For $\xi\in X$ and $t>0$, the strong $t$-neighborhood of
$\xi$ is denoted by $\mathcal{N}_{\xi}(t)$ and is defined by
\begin{center}
$\mathcal{N}_{\xi}(t)=\{\eta\in X : \mathcal{F}_{\xi\eta}(t)>1-t\} $.
\end{center}
The collection $\mathfrak{N}_{\xi}=\{\mathcal{N}_{\xi}(t):t>0 \}$ is called the strong
neighborhood system at $\xi$ and the union $\mathfrak{N}=\bigcup\limits_{\xi\in X}\mathfrak{N}_{\xi}$ is
called the strong neighborhood system for $X$.
\end{defn}

From Theorem \ref{thm61}, we can write
$\mathcal{N}_{\xi}(t)=\{\eta\in X:
d_{L}(\mathcal{F}_{\xi\eta},\varepsilon_{0})<t\} $. If $\tau$ is
continuous, then the strong neighborhood system $\mathfrak{N}$
determines a Hausdorff topology for $X$. This topology is called
the strong topology for $X$ and members of this topology are
called strongly open sets.

Throughout the paper, in a PM space $(X,\mathcal{F},\tau)$, we always consider that $\tau$ is
continuous and $X$ is endowed with the strong topology.

In a PM space $(X,\mathcal{F},\tau)$ the strong closure of any subset $\mathcal{M}$ of $X$ is
denoted by $k(\mathcal{M})$ and for any subset $\mathcal{M}(\neq\emptyset)$ of $X$ strong
closure of $\mathcal{M}$ is defined by,
$$k(\mathcal{M})=\{c\in X: ~\text{for any}~ t>0, ~\text{there exists}~ e\in \mathcal{M} ~\text{such that}~\mathcal{F}_{ce}(t)>1-t\}.$$

\begin{defn}\cite{Du1}
Let $(X,\mathcal{F},\tau)$ be a PM space. Then a subset $\mathcal{M}$ of $X$ is called
strongly closed if its complement is a strongly open set.
\end{defn}
\begin{defn}\cite{Ma5, Sh5}
Let $(X,\mathcal{F},\tau)$ be a PM space and $\mathcal{M}\neq \emptyset$ be a subset of $X$. Then
$l\in X$ is said to be a strong limit point of $\mathcal{M}$ if for every $t>0$,
$$\mathcal{N}_l(t)\cap(\mathcal{M}\setminus\{l\})\neq\emptyset.$$
The set of all strong limit points of the set $\mathcal{M}$ is denoted by $L_\mathcal{M}^\mathcal{F}$.
\end{defn}
\begin{defn}\cite{Du1}
Let $(X,\mathcal{F},\tau)$ be a PM space and $\mathcal{M}$ be a subset of $X$. Let $\mathcal{Q}$ be
a family of strongly open subsets of $X$ such that $\mathcal{Q}$ covers $\mathcal{M}$. Then
$\mathcal{Q}$ is said to be a strong open cover for $\mathcal{M}$.
\end{defn}
\begin{defn}\cite{Du1}
Let $(X,\mathcal{F},\tau)$ be a PM space and $\mathcal{M}$ be a subset of $X$. Then $\mathcal{M}$ is
called a strongly compact set if for every strong open cover of $\mathcal{M}$ has a finite subcover.
\end{defn}
\begin{defn}\cite{Du1}
Let $(X,\mathcal{F},\tau)$ be a PM space, $x=\{x_k\}_{k\in\mathbb{N}}$ be a sequence in $X$. Then $x$ is
said to be strongly bounded if there exists a strongly compact subset $\mathcal{C}$ of $X$ such
that $x_k\in \mathcal{C}$, for all $k\in\mathbb{N}$.
\end{defn}
\begin{thm}\cite{Du1}\label{thm62}
Let $(X,\mathcal{F},\tau)$ be a PM space and $\mathcal{M}$ be a strongly compact subset of $X$. Then
every strongly closed subset of $\mathcal{M}$ is strongly compact.
\end{thm}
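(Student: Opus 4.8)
The plan is to adapt the classical argument that a closed subset of a compact set is compact, using the fact that under our standing hypothesis ($\tau$ continuous) the strong neighborhood system $\mathfrak{N}$ genuinely determines a topology on $X$. Let $\mathcal{N}$ be a strongly closed subset of $\mathcal{M}$. First I would take an arbitrary strong open cover $\mathcal{Q}$ of $\mathcal{N}$, and the goal is to extract a finite subcover of $\mathcal{N}$ from $\mathcal{Q}$.

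The key step is to enlarge $\mathcal{Q}$ to a strong open cover of the whole of $\mathcal{M}$. Since $\mathcal{N}$ is strongly closed, by definition its complement $X\setminus\mathcal{N}$ is strongly open. Set $\mathcal{Q}^{*}=\mathcal{Q}\cup\{X\setminus\mathcal{N}\}$. Then $\mathcal{Q}^{*}$ is a family of strongly open subsets of $X$, and since $\mathcal{Q}$ covers $\mathcal{N}$ while $X\setminus\mathcal{N}$ contains every point not in $\mathcal{N}$, the family $\mathcal{Q}^{*}$ covers $X$, hence in particular covers $\mathcal{M}$. Thus $\mathcal{Q}^{*}$ is a strong open cover of $\mathcal{M}$.

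Next, by the strong compactness of $\mathcal{M}$, there is a finite subfamily $\{U_{1},\dots,U_{n}\}\subset\mathcal{Q}^{*}$ that covers $\mathcal{M}$, and therefore also covers $\mathcal{N}\subset\mathcal{M}$. If the set $X\setminus\mathcal{N}$ happens to occur among the $U_{i}$, we discard it; since it contains no point of $\mathcal{N}$, the remaining sets still cover $\mathcal{N}$, and they all belong to $\mathcal{Q}$. This yields a finite subcover of $\mathcal{N}$ drawn from $\mathcal{Q}$, so $\mathcal{N}$ is strongly compact, completing the proof.

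I do not expect a real obstacle in this argument; the only points that need a word of care are that $X\setminus\mathcal{N}$ is legitimately strongly open, so that $\mathcal{Q}^{*}$ qualifies as a strong open cover in the sense of the definition, and that the strong neighborhood system indeed forms a topology so that complements of strongly closed sets behave as expected — both of which are guaranteed by the continuity of $\tau$ assumed throughout.
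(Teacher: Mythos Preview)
Your argument is correct: it is the standard topological proof that a closed subset of a compact space is compact, carried out verbatim for the strong topology, and the points you flag (that $X\setminus\mathcal{N}$ is strongly open and that $\mathfrak{N}$ generates a genuine topology when $\tau$ is continuous) are exactly what is needed. Note, however, that the paper does not actually supply a proof of this theorem; it is quoted as a known result from \cite{Du1}, so there is no in-paper argument to compare against.
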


\begin{defn}\cite{Sh5}
Let $(X,\mathcal{F},\tau)$ be a PM space. Then for any $u>0$, the subset $\mathcal{V}(u)$ of $ X\times X$ given by
\begin{center}
$\mathcal{V}(u)=\{(p,q):\mathcal{F}_{pq}(u)>1-u\} $
\end{center}
is called the strong $u$-vicinity.
\end{defn}
\begin{thm}\cite{Sh5}\label{thm63}
 Let $(X,\mathcal{F},\tau)$ be a PM space and $\tau $ be continuous. Then for any $u>0$, there is an $\alpha>0$ such that
 $\mathcal{V}(\alpha)\circ\mathcal{V}(\alpha)\subset
 \mathcal{V}(u)$, where $\mathcal{V}(\alpha)\circ\mathcal{V}(\alpha)=\{(p,r):$ ~\mbox{for some}~ $q$,~  $(p,q)$
 and $(q,r)\in \mathcal{V}(\alpha)\}$.
 \end{thm}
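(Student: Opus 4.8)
The plan is to reduce the statement about strong vicinities to a statement about $d_L$-balls around the identity $\varepsilon_{0}$ in the metric space $(\mathcal{D}^{+},d_L)$, and then to exploit the continuity of the triangle function $\tau$ at the single point $(\varepsilon_{0},\varepsilon_{0})$ together with the probabilistic triangle inequality (P-4). First I would record the reformulation coming from Theorem \ref{thm61}: for every $t>0$ and all $a,b\in X$ one has $(a,b)\in\mathcal{V}(t)$ if and only if $d_{L}(\mathcal{F}_{ab},\varepsilon_{0})<t$. In particular, to prove $(p,r)\in\mathcal{V}(u)$ it suffices to show $d_{L}(\mathcal{F}_{pr},\varepsilon_{0})<u$, or equivalently $\mathcal{F}_{pr}(u)>1-u$.

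The key step is the choice of $\alpha$. Since $\varepsilon_{0}$ is the identity element of $\tau$, we have $\tau(\varepsilon_{0},\varepsilon_{0})=\varepsilon_{0}$, and since $\tau$ is continuous as a map $\mathcal{D}^{+}\times\mathcal{D}^{+}\to\mathcal{D}^{+}$ with respect to the topology induced by $d_{L}$, continuity at the point $(\varepsilon_{0},\varepsilon_{0})$ produces an $\alpha>0$ such that whenever $f,g\in\mathcal{D}^{+}$ satisfy $d_{L}(f,\varepsilon_{0})<\alpha$ and $d_{L}(g,\varepsilon_{0})<\alpha$, then $d_{L}(\tau(f,g),\varepsilon_{0})<u$. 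This $\alpha$ is the one claimed in the theorem (one may also shrink it so that $\alpha\le u$, though this is not needed).

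It remains to verify the inclusion. Let $(p,r)\in\mathcal{V}(\alpha)\circ\mathcal{V}(\alpha)$ and pick $q\in X$ with $(p,q),(q,r)\in\mathcal{V}(\alpha)$. By the reformulation, $d_{L}(\mathcal{F}_{pq},\varepsilon_{0})<\alpha$ and $d_{L}(\mathcal{F}_{qr},\varepsilon_{0})<\alpha$, so by the choice of $\alpha$ we get $d_{L}(\tau(\mathcal{F}_{pq},\mathcal{F}_{qr}),\varepsilon_{0})<u$, that is, $\tau(\mathcal{F}_{pq},\mathcal{F}_{qr})(u)>1-u$ by Theorem \ref{thm61}. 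Now (P-4) gives $\mathcal{F}_{pr}\ge\tau(\mathcal{F}_{pq},\mathcal{F}_{qr})$ in the partial order of $\mathcal{D}^{+}$, hence in particular $\mathcal{F}_{pr}(u)\ge\tau(\mathcal{F}_{pq},\mathcal{F}_{qr})(u)>1-u$, which means $(p,r)\in\mathcal{V}(u)$. Since $(p,r)$ was arbitrary, $\mathcal{V}(\alpha)\circ\mathcal{V}(\alpha)\subset\mathcal{V}(u)$.

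The only genuinely delicate point is the continuity argument: one must be explicit that ``$\tau$ continuous'' here means continuity with respect to the $d_{L}$-metric on $\mathcal{D}^{+}$ (equivalently, weak convergence of distance distribution functions), applied at the single point $(\varepsilon_{0},\varepsilon_{0})$. Everything else is a routine unwinding of the definitions of $\mathcal{V}(\cdot)$ and $\mathcal{V}(\alpha)\circ\mathcal{V}(\alpha)$, the use of Theorem \ref{thm61}, and the elementary order observation that if $f\le g$ in $\mathcal{D}^{+}$ and $f(u)>1-u$ then $g(u)>1-u$.
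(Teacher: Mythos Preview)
Your proof is correct and follows the standard route: reformulate membership in $\mathcal{V}(t)$ via Theorem~\ref{thm61} as $d_{L}(\mathcal{F}_{ab},\varepsilon_{0})<t$, use continuity of $\tau$ at $(\varepsilon_{0},\varepsilon_{0})$ to obtain $\alpha$, and then apply the probabilistic triangle inequality (P-4) together with the order on $\mathcal{D}^{+}$.

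Note, however, that the paper does not actually supply a proof of this theorem: it is quoted from \cite{Sh5} (Schweizer and Sklar) without argument, and only the consequence recorded in Note~\ref{no61} is used later. So there is no ``paper's own proof'' to compare against. Your argument is precisely the classical one from \cite{Sh5}, and nothing is missing.
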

\begin{note}\label{no61}
From the hypothesis of Theorem \ref{thm63}, we can say that for any $u>0$,
there is an $\alpha >0 $ such that $\mathcal{F}_{pr}(u)>1-u$
 whenever $\mathcal{F}_{pq}(\alpha)>1-\alpha $ and $\mathcal{F}_{qr}(\alpha)>1-\alpha$. Equivalently it
 can be written as: for any $u>0$, there is an $\alpha>0$ such that
 $d_{L}(\mathcal{F}_{pr},\varepsilon_{0})<u$ whenever $d_{L}(\mathcal{F}_{pq},\varepsilon_{0})<\alpha$
 and $ d_{L}(\mathcal{F}_{qr}, \varepsilon_{0})<\alpha$.
\end{note}

\begin{defn}\cite{Sh5}
Let $(X,\mathcal{F},\tau)$ be a PM space. A sequence $x=\{x_k\}_{k\in \mathbb{N}}$ in $X$ is said to be strongly convergent to
$\mathcal{L}\in X$ if for every $t>0$, $\exists$ a natural number $k_0$ such that
$$x_k\in\mathcal{N}_\mathcal{L}(t),\hspace{1 in} \text{whenever}~ k\geq k_0 .$$
\end{defn}
In this case we write
$\mathcal{F}$-$\lim\limits_{k\rightarrow\infty}x_k=\mathcal{L}$ or, $x_k\stackrel{\mathcal{F}}\longrightarrow \mathcal{L}$.

\begin{defn}\cite{Sh5}
Let $(X,\mathcal{F},\tau)$ be a PM space. A sequence $x=\{x_k\}_{k\in\mathbb{N}}$ in $X$ is said
to be strong Cauchy if for every $t>0$, $\exists$ a natural number $k_0$ such that
$$(x_k,x_r)\in\mathcal{U}(t), \hspace{1 in} \text{whenever}~ k,r\geq k_0.$$
\end{defn}

\section{\textbf{Strong $\mathcal{A}$-statistical convergence and strong $\mathcal{A}$-statistical Cauchyness}}

In this section we study some basic properties of strong
$\mathcal{A}$-statistical convergence and strong
$\mathcal{A}$-statistical Cauchyness in a PM space not studied
earlier.
\begin{defn}\cite{Se2}
Let $(X,\mathcal{F},\tau)$ be a PM space and $\mathcal I$ be an
admissible ideal in $\mathbb{N}$. A sequence $x=\{x_k\}_{k \in
\mathbb{N}}$ in $X$ is said to be strongly $\mathcal I$-convergent
to $\alpha\in X$, if for any $t> 0$
\begin{center}
$\{k\in\mathbb{N}: \mathcal{F}_{x_k\alpha}(t)\leq 1-t\} \in
\mathcal I ~~~~~~~~or~~~~~~~~ \{
k\in\mathbb{N}:x_k\notin\mathcal{N}_\alpha(t)\} \in \mathcal I.$
\end{center}
In this case we write $\mathcal
I^{\mathcal{F}}$-$\lim\limits_{k\rightarrow\infty}x_k = \alpha$.
\end{defn}

\begin{defn}\cite{Se2}
Let $(X,\mathcal{F},\tau)$ be a PM space and $\mathcal I$ be an
admissible ideal in $\mathbb{N}$. A sequence $x=\{x_k\}_{k \in
\mathbb{N}}$ in $X$ is said to be strongly $\mathcal I$-Cauchy if
for any $t>0$, $\exists$ a natural number $N_0=N_0(t)$ such that
$$\{k\in\mathbb{N}:\mathcal{F}_{x_kx_{N_0}}(t)\leq 1-t\} \in \mathcal I ~~~~~~~or~~~~~~~~ \{
k\in\mathbb{N}:x_k\notin\mathcal{N}_{N_0}(t)\} \in \mathcal I.$$
\end{defn}

\begin{note}
(i) If $\mathcal{I}=\mathcal{I}_{fin}=\{\mathcal{K}\subset
\mathbb{N}: \left|\mathcal{K}\right|<\infty\}$, then in a PM space
the notions of strong $\mathcal {I}_{fin}$-convergence and strong
$\mathcal {I}_{fin}$-Cauchyness  coincide with the notions of
strong convergence and strong Cauchyness respectively.

(ii) If $ \mathcal I = \mathcal{I}_{\mathcal A} = \{ B \subset
\mathbb{N}: \delta_{\mathcal A} (B) = 0\}$, where $\mathcal A =
(a_{nk})$ is an $\mathbb{N} \times \mathbb{N}$ non negative
regular summability matrix, then the notions of strong
$\mathcal{I}_{\mathcal A}$-convergence and strong
$\mathcal{I}_{\mathcal A}$-Cauchyness of sequences in a PM space
coincide with the notions of strong $\mathcal A$-statistical
convergence and strong $\mathcal A$-statistical Cauchyness
respectively. Further, if $\mathcal A$ be the Cesaro matrix $C_1$,
then the notions of strong $\mathcal{I}_{C_1}$-convergence and
strong $\mathcal{I}_{C_1}$-Cauchyness of sequences in a PM space
coincide with the notions of strong statistical convergence and
strong statistical Cauchyness respectively.
\end{note}

If a sequence $x=\{x_k\}_{k \in\mathbb{N}}$ in a PM space
$(X,\mathcal{F},\tau)$ is strongly $\mathcal{A}$-statistically
convergent to $\mathcal{L} \in X$, then we write,
$st_\mathcal{A}^{\mathcal{F}}$-$\lim\limits_{k\rightarrow
\infty}x_k =\mathcal{L} $ or simply as
$x_k\xrightarrow{st^\mathcal{F}_{\mathcal{A}}}\mathcal{L}$ and
$\mathcal{L}$ is called the $\mathcal{A}$-statistical limit of
$x$.

\begin{rem}\label{rem61}
The following three statements are equivalent:
\begin{enumerate}[(i)]
    \item $x_k\xrightarrow{st_\mathcal{A}^{\mathcal{F}}}\mathcal{L}$
    \item For each $t>0,~\delta_\mathcal{A}(\{k\in \mathbb{N}: d_L(\mathcal{F}_{x_k\mathcal{L}},\varepsilon_0)\geq t\})=0$
    \item $st_\mathcal{A}\mbox{-}\lim\limits_{k\rightarrow\infty}d_L(\mathcal{F}_{x_k\mathcal{L}},\varepsilon_0)=0$.
\end{enumerate}
\end{rem}
\begin{proof}
It is clear from Theorem \ref{thm61}, Definition 3.1 and Note
3.1(ii).
\end{proof}

\begin{thm}\label{thm64}
Let $(X,\mathcal{F},\tau)$ be a PM space and $x=\{x_k\}_{k\in\mathbb{N}}$ be a strongly $\mathcal{A}$-statistically
convergent sequence in $X$. Then strong $\mathcal{A}$-statistical limit of $x$ is unique.
\end{thm}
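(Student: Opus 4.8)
The plan is to assume for contradiction that the sequence has two distinct strong $\mathcal{A}$-statistical limits, say $\mathcal{L}_1$ and $\mathcal{L}_2$ with $\mathcal{L}_1 \neq \mathcal{L}_2$, and derive a contradiction by exploiting the triangle-function inequality in the guise of Note \ref{no61}. Since $\mathcal{L}_1 \neq \mathcal{L}_2$, by (P-2) we have $\mathcal{F}_{\mathcal{L}_1\mathcal{L}_2} \neq \varepsilon_0$, so $d_L(\mathcal{F}_{\mathcal{L}_1\mathcal{L}_2}, \varepsilon_0) = \rho$ for some $\rho > 0$. The goal is to show that $d_L(\mathcal{F}_{\mathcal{L}_1\mathcal{L}_2}, \varepsilon_0)$ must be arbitrarily small, which is absurd.

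First I would fix an arbitrary $u > 0$ (which I will eventually take smaller than $\rho$) and invoke Note \ref{no61} to obtain $\alpha > 0$ such that $d_L(\mathcal{F}_{pr}, \varepsilon_0) < u$ whenever $d_L(\mathcal{F}_{pq}, \varepsilon_0) < \alpha$ and $d_L(\mathcal{F}_{qr}, \varepsilon_0) < \alpha$. Applying the characterization in Remark \ref{rem61}(ii) with this $\alpha$, the sets
$$B_1 = \{k \in \mathbb{N} : d_L(\mathcal{F}_{x_k\mathcal{L}_1}, \varepsilon_0) \geq \alpha\}, \qquad B_2 = \{k \in \mathbb{N} : d_L(\mathcal{F}_{x_k\mathcal{L}_2}, \varepsilon_0) \geq \alpha\}$$
both have $\mathcal{A}$-density zero. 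Then $\delta_\mathcal{A}(B_1 \cup B_2) = 0$, and since $\delta_\mathcal{A}((B_1 \cup B_2)^c) = 1$, the complement is in particular non-empty; pick any $k$ in it. For such $k$ we have $d_L(\mathcal{F}_{x_k\mathcal{L}_1}, \varepsilon_0) < \alpha$ and $d_L(\mathcal{F}_{x_k\mathcal{L}_2}, \varepsilon_0) < \alpha$, whence by the choice of $\alpha$ (using symmetry (P-3) to read $\mathcal{F}_{\mathcal{L}_1 x_k} = \mathcal{F}_{x_k \mathcal{L}_1}$) we get $d_L(\mathcal{F}_{\mathcal{L}_1\mathcal{L}_2}, \varepsilon_0) < u$. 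Since $u > 0$ was arbitrary, $d_L(\mathcal{F}_{\mathcal{L}_1\mathcal{L}_2}, \varepsilon_0) = 0$, i.e. $\mathcal{F}_{\mathcal{L}_1\mathcal{L}_2} = \varepsilon_0$, forcing $\mathcal{L}_1 = \mathcal{L}_2$ by (P-1) and (P-2), the desired contradiction.

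The only mildly delicate point — and the step I expect to require the most care — is the passage from "$B_1 \cup B_2$ has $\mathcal{A}$-density zero" to "its complement is non-empty"; this is where the regularity/nonnegativity of $\mathcal{A}$ matters, and it is exactly the remark already recorded in the excerpt that $\delta_\mathcal{A}(B^c) = 1$ for any $\mathcal{A}$-density-zero set $B$ (a density-$1$ set is certainly non-empty, indeed infinite). Everything else is a routine translation between the neighbourhood formulation and the $d_L$ formulation via Theorem \ref{thm61} and Remark \ref{rem61}. I would present the argument entirely in the $d_L$-metric language, since that makes the triangle-type estimate from Note \ref{no61} transparent and avoids juggling the $\mathcal{F}_{ab}(t) > 1 - t$ inequalities directly.
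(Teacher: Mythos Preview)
Your proposal is correct and follows essentially the same approach as the paper's proof: assume two distinct limits, invoke Note~\ref{no61} to get the parameter $\alpha$ (the paper calls it $\eta$), use that the union of two $\mathcal{A}$-density-zero sets has $\mathcal{A}$-density zero so its complement is non-empty, and obtain the contradiction via the triangle-type estimate. The only cosmetic difference is that the paper fixes $t = d_L(\mathcal{F}_{\alpha_1\alpha_2},\varepsilon_0)$ at the outset and derives $d_L(\mathcal{F}_{\alpha_1\alpha_2},\varepsilon_0) < t$ directly, whereas you let $u$ be arbitrary and conclude $d_L = 0$; both are equivalent.
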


\begin{proof}
If possible, let $st_\mathcal{A}^{\mathcal{F}}$-$\lim\limits_{k\rightarrow\infty}x_k = \alpha_1 $
and $st_\mathcal{A}^{\mathcal{F}}$-$\lim\limits_{k\rightarrow \infty}x_k = \alpha_2$
with $\alpha_1\neq \alpha_2$. So $\mathcal{F}_{\alpha_1\alpha_2}\neq\varepsilon_0$. Then
there is a $t>0$ such that $d_L(\mathcal{F}_{\alpha_1\alpha_2},\varepsilon_0)=t$. We
choose $\eta>0$ so that $d_L(\mathcal{F}_{pq},\varepsilon_0)<\eta$ and $d_L(\mathcal{F}_{qr},\varepsilon_0)<\eta $ imply
that $d_L(\mathcal{F}_{pr},\varepsilon_0)<t$. Since
$st_\mathcal{A}^{\mathcal{F}}$-$\lim\limits_{k\rightarrow \infty}x_k = \alpha_1 $ and
$st_\mathcal{A}^{\mathcal{F}}$-$\lim\limits_{k\rightarrow\infty}x_k = \alpha_2$, so
$\delta_\mathcal{A}(E_1(\eta)) = 0$ and $\delta_\mathcal{A}(E_2(\eta)) = 0$, where
$$E_1(\eta)=\{k\in\mathbb{N}: \mathcal{F}_{x_k\alpha_1}(\eta)\leq 1- \eta\}$$ and
$$E_2(\eta)=\{k\in \mathbb{N}: \mathcal{F}_{x_k\alpha_2}(\eta)\leq 1- \eta\}.$$
Now let $E_3(\eta)=E_1(\eta)\cup E_2(\eta)$. Then $\delta_\mathcal{A}(E_3(\eta))=0$ and this gives
$\delta_\mathcal{A}(E^c_3(\eta))=1$. Let $k\in E^c_3(\eta).$ Then
$d_L(\mathcal{F}_{x_k\alpha_1},\varepsilon_0)<\eta$ and
$d_L(\mathcal{F}_{\alpha_2x_k},\varepsilon_0)<\eta$ and so
$d_L(\mathcal{F}_{\alpha_1\alpha_2},\varepsilon_0)<t$, this gives a contradiction. Hence
strong $\mathcal{A}$-statistical limit of a strongly $\mathcal{A}$-statistically convergent sequence in a PM space is unique.
\end{proof}

\begin{thm}\label{thm65}
Let $(X,\mathcal{F},\tau)$ be a PM space. Let
$x=\{x_k\}_{k\in\mathbb{N}}$ and $y=\{y_k\}_{k\in\mathbb{N}}$ be
two sequences in $X$ such that
$x_k\xrightarrow{st_\mathcal{A}^{\mathcal{F}}} p \in X $ and
$y_k\xrightarrow{st_\mathcal{A}^{\mathcal{F}}} q \in X$. Then
$$st_\mathcal{A}\mbox{-}\lim\limits_{k\rightarrow\infty}d_L(\mathcal{F}_{x_ky_k}, \mathcal{F}_{pq})=0.$$
\end{thm}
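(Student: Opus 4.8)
The plan is to work entirely in the compact metric space $(\mathcal{D}^+, d_L)$ and reduce the statement to an $st_\mathcal{A}$-statistical estimate on real numbers, using Remark 3.1 to translate strong $\mathcal{A}$-statistical convergence of $x$ and $y$ into the statements $st_\mathcal{A}\mbox{-}\lim_k d_L(\mathcal{F}_{x_kp},\varepsilon_0)=0$ and $st_\mathcal{A}\mbox{-}\lim_k d_L(\mathcal{F}_{y_kq},\varepsilon_0)=0$. The key analytic input is a continuity/quasi-triangle bound: for any $t>0$ there should be an $\eta>0$ such that whenever $d_L(\mathcal{F}_{x_kp},\varepsilon_0)<\eta$ and $d_L(\mathcal{F}_{y_kq},\varepsilon_0)<\eta$, one has $d_L(\mathcal{F}_{x_ky_k},\mathcal{F}_{pq})<t$. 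This is the analogue for the ``distance'' $\mathcal{F}_{ab}$ of the familiar fact that the metric is continuous; it follows from the continuity of $\tau$ via Note 3.1, together with the triangle inequality (P-4) applied twice, i.e. $\mathcal{F}_{x_ky_k}\geq \tau(\tau(\mathcal{F}_{x_kp},\mathcal{F}_{pq}),\mathcal{F}_{qy_k})$ and symmetrically $\mathcal{F}_{pq}\geq \tau(\tau(\mathcal{F}_{px_k},\mathcal{F}_{x_ky_k}),\mathcal{F}_{y_kq})$, which pins $\mathcal{F}_{x_ky_k}$ close to $\mathcal{F}_{pq}$ in the $d_L$ metric when $\mathcal{F}_{x_kp}$ and $\mathcal{F}_{y_kq}$ are both close to $\varepsilon_0$.

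With that lemma in hand, the proof proceeds as follows. Fix $t>0$ and choose the corresponding $\eta>0$. Set
$$A=\{k\in\mathbb{N}: d_L(\mathcal{F}_{x_kp},\varepsilon_0)\geq \eta\}, \qquad B=\{k\in\mathbb{N}: d_L(\mathcal{F}_{y_kq},\varepsilon_0)\geq \eta\}.$$
By Remark 3.1 applied to both sequences, $\delta_\mathcal{A}(A)=0$ and $\delta_\mathcal{A}(B)=0$, hence $\delta_\mathcal{A}(A\cup B)=0$. For every $k\notin A\cup B$ the lemma gives $d_L(\mathcal{F}_{x_ky_k},\mathcal{F}_{pq})<t$, so $\{k\in\mathbb{N}: d_L(\mathcal{F}_{x_ky_k},\mathcal{F}_{pq})\geq t\}\subset A\cup B$ and therefore has $\mathcal{A}$-density zero. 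Since $t>0$ was arbitrary, this is exactly the assertion $st_\mathcal{A}\mbox{-}\lim_k d_L(\mathcal{F}_{x_ky_k},\mathcal{F}_{pq})=0$.

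The main obstacle is the quasi-triangle lemma for $d_L$: one must carefully extract a single $\eta$ controlling $d_L(\mathcal{F}_{x_ky_k},\mathcal{F}_{pq})$ from smallness of $d_L(\mathcal{F}_{x_kp},\varepsilon_0)$ and $d_L(\mathcal{F}_{y_kq},\varepsilon_0)$, since $d_L$ does not literally satisfy a triangle inequality through $\mathcal{F}_{pq}$ the way it does through $\varepsilon_0$. The clean route is to iterate Note 3.1: first find $\alpha>0$ with $d_L(\mathcal{F}_{ac},\varepsilon_0)<t$ whenever $d_L(\mathcal{F}_{ab},\varepsilon_0)<\alpha$ and $d_L(\mathcal{F}_{bc},\varepsilon_0)<\alpha$, and then use (P-4) together with the monotonicity and the identity-element property of $\tau$ to compare $\mathcal{F}_{x_ky_k}$ with $\mathcal{F}_{pq}$ in both directions; choosing $\eta$ small enough relative to such an $\alpha$ yields the bound. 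Once this lemma is isolated, the density bookkeeping in the last paragraph is entirely routine, relying only on the fact that a finite union of $\mathcal{A}$-density-zero sets again has $\mathcal{A}$-density zero.
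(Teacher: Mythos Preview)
Your overall approach is sound and in fact supplies more than the paper does: the paper's entire proof is the one-line remark that the result is Theorem~3.1 of \c{S}en\c{c}imen--Pehlivan \cite{Se2} specialized to the ideal $\mathcal{I}=\mathcal{I}_\mathcal{A}$. What you have sketched is a self-contained reconstruction of that cited argument, and once your key lemma is in hand the $\mathcal{A}$-density bookkeeping in your last paragraph is exactly right.

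One small correction to your sketch of the lemma. The statement you need---for each $t>0$ there is $\eta>0$ so that $d_L(\mathcal{F}_{ap},\varepsilon_0)<\eta$ and $d_L(\mathcal{F}_{bq},\varepsilon_0)<\eta$ force $d_L(\mathcal{F}_{ab},\mathcal{F}_{pq})<t$---is precisely the uniform continuity of $\mathcal{F}:X\times X\to(\mathcal{D}^+,d_L)$ with respect to the strong uniformity, a standard fact that the paper itself invokes by name in the proof of Theorem~\ref{thm611}. However, Note~\ref{no61} by itself does not deliver this: it only controls $d_L(\mathcal{F}_{pr},\varepsilon_0)$, i.e.\ closeness to $\varepsilon_0$, not $d_L$-distance to a general element $\mathcal{F}_{pq}\in\mathcal{D}^+$. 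The clean route is the other one you already gesture at: since $\tau$ is continuous on the compact metric space $(\mathcal{D}^+,d_L)$ it is uniformly continuous, so for each $t>0$ there is $\eta>0$ with $d_L(\tau(F,H),H)=d_L(\tau(F,H),\tau(\varepsilon_0,H))<t/2$ for all $H\in\mathcal{D}^+$ whenever $d_L(F,\varepsilon_0)<\eta$. Two applications of (P-4) give $\mathcal{F}_{x_ky_k}\geq\tau\bigl(\mathcal{F}_{x_kp},\tau(\mathcal{F}_{pq},\mathcal{F}_{qy_k})\bigr)$ and the symmetric inequality with $\mathcal{F}_{pq}$ on the left, and the uniform bound then yields $d_L(\mathcal{F}_{x_ky_k},\mathcal{F}_{pq})<t$. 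This is a cosmetic repair; your plan is correct.
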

\begin{proof}
The proof directly follows from Theorem 3.1 \cite{Se2}, by taking
$\mathcal I = \mathcal{I}_{\mathcal A}$.
\end{proof}

\begin{thm}\label{thm66}
Let $(X,\mathcal{F},\tau)$ be a  PM space and $x=\{x_k\}_{k\in\mathbb{N}}$ be a sequence in $X$. If
$x$ is strongly convergent to $\mathcal{L} \in X$, then $st_\mathcal{A}^{\mathcal{F}}\mbox{-}\lim\limits_{k\rightarrow\infty} x_k=\mathcal{L}$.
\end{thm}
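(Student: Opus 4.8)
\textbf{Proof proposal for Theorem \ref{thm66}.}

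The plan is to show that ordinary strong convergence is stronger than strong $\mathcal{A}$-statistical convergence, which amounts to observing that for each fixed $t>0$ the ``exceptional set'' in the definition of strong $\mathcal{A}$-statistical convergence is finite, and a finite set has $\mathcal{A}$-density zero. First I would fix an arbitrary $t>0$ and use the hypothesis $x_k\xrightarrow{\mathcal{F}}\mathcal{L}$ to produce a natural number $k_0$ such that $x_k\in\mathcal{N}_\mathcal{L}(t)$, i.e. $\mathcal{F}_{x_k\mathcal{L}}(t)>1-t$, for all $k\geq k_0$. Consequently the set $B(t)=\{k\in\mathbb{N}:\mathcal{F}_{x_k\mathcal{L}}(t)\leq 1-t\}$ is contained in the finite set $\{1,2,\dots,k_0-1\}$.

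Next I would invoke the regularity of $\mathcal{A}$ to conclude $\delta_\mathcal{A}(B(t))=0$. Since $B(t)$ is finite, $\sum_{k\in B(t)}a_{nk}$ is a finite sum of terms $a_{nk}$ with $k$ ranging over a fixed finite set, and by condition (ii) of the Silverman--Toeplitz theorem each $a_{nk}\to 0$ as $n\to\infty$; hence $\sum_{k\in B(t)}a_{nk}\to 0$, that is, $\delta_\mathcal{A}(B(t))=0$. As $t>0$ was arbitrary, this establishes $st_\mathcal{A}^{\mathcal{F}}\mbox{-}\lim\limits_{k\rightarrow\infty}x_k=\mathcal{L}$.

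I do not expect any genuine obstacle here; the only point requiring a little care is the passage from ``$B(t)$ finite'' to ``$\delta_\mathcal{A}(B(t))=0$'', which is exactly where nonnegativity and regularity of $\mathcal{A}$ (specifically $\lim_n a_{nk}=0$ for each $k$) are used, and this is already available from the Silverman--Toeplitz conditions recalled in the introduction. Alternatively, one could note that $\mathcal{I}_{\mathcal A}$ is an admissible ideal, so it contains every finite set, and then cite Note 3.1(ii) together with the trivial fact that a strongly convergent sequence is strongly $\mathcal{I}_{fin}$-convergent and $\mathcal{I}_{fin}\subset\mathcal{I}_{\mathcal A}$; I would likely present the direct density computation as it is self-contained.
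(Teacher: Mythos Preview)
Your proposal is correct and is exactly the natural argument; the paper itself simply states ``The proof is very easy, so omitted'' and gives no details, so there is nothing further to compare.
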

\begin{proof}
The proof is very easy, so omitted.
\end{proof}

\begin{thm}\label{thm67}
Let $(X,\mathcal{F},\tau)$ be a PM space and $x=\{x_k\}_{k\in\mathbb{N}}$ be a sequence in $X$. Then
$st_\mathcal{A}^\mathcal{F}$-$\lim\limits_{k\rightarrow\infty}x_k=\mathcal{L}$ if and only if there is a
subset $G=\{q_1<q_2<...\}$ of $\mathbb{N}$ such that $\delta_\mathcal{A}(G)=1$ and $\mathcal{F}$-$\lim\limits_{k\rightarrow\infty}x_{q_k}=\mathcal{L}$.
\end{thm}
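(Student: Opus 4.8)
The plan is to prove Theorem \ref{thm67} by reducing it to a statement about the real sequence $d_k := d_L(\mathcal{F}_{x_k\mathcal{L}},\varepsilon_0)$ in the metric space $(\mathcal{D}^+,d_L)$, using Remark \ref{rem61}(iii): namely $st_\mathcal{A}^\mathcal{F}\text{-}\lim x_k=\mathcal{L}$ holds if and only if $st_\mathcal{A}\text{-}\lim d_k = 0$, and $\mathcal{F}\text{-}\lim x_{q_k}=\mathcal{L}$ holds if and only if $d_{q_k}\to 0$ (the latter by Theorem \ref{thm61} together with the definition of strong convergence). So it suffices to establish the classical Connor-type characterization: a nonnegative real sequence is $\mathcal{A}$-statistically convergent to $0$ if and only if it converges to $0$ along an index set of $\mathcal{A}$-density one.

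For the sufficiency direction ($\Leftarrow$), I would argue directly: suppose $G=\{q_1<q_2<\cdots\}$ has $\delta_\mathcal{A}(G)=1$ and $x_{q_k}\xrightarrow{\mathcal{F}}\mathcal{L}$. Fix $t>0$. By strong convergence of $\{x_{q_k}\}$ there is $k_0$ with $x_{q_k}\in\mathcal{N}_\mathcal{L}(t)$ for all $k\ge k_0$, so the set $\{k\in\mathbb{N}: \mathcal{F}_{x_k\mathcal{L}}(t)\le 1-t\}$ is contained in $(\mathbb{N}\setminus G)\cup\{q_1,\dots,q_{k_0-1}\}$. Since $\delta_\mathcal{A}(\mathbb{N}\setminus G)=0$ and finite sets have $\mathcal{A}$-density zero (here I use that $\mathcal{A}$ is regular, so $a_{nk}\to 0$ for each fixed $k$, which forces $\delta_\mathcal{A}$ of a finite set to be $0$), and $\mathcal{A}$-density-zero sets are closed under finite union, the set $\{k: \mathcal{F}_{x_k\mathcal{L}}(t)\le 1-t\}$ has $\mathcal{A}$-density $0$. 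As $t>0$ was arbitrary, $x_k\xrightarrow{st_\mathcal{A}^\mathcal{F}}\mathcal{L}$.

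The necessity direction ($\Rightarrow$) is the main obstacle and requires the standard diagonal/exhaustion construction adapted to the $\mathcal{A}$-density setting. Assume $x_k\xrightarrow{st_\mathcal{A}^\mathcal{F}}\mathcal{L}$. For each $j\in\mathbb{N}$ put $K_j=\{k\in\mathbb{N}: \mathcal{F}_{x_k\mathcal{L}}(1/j)\le 1-1/j\}$, so $\delta_\mathcal{A}(K_j)=0$ and hence $\delta_\mathcal{A}(K_j^c)=1$; note $K_1\subset K_2\subset\cdots$. The idea is to choose an increasing sequence of thresholds $v_1<v_2<\cdots$ in $\mathbb{N}$ so that for $n\ge v_j$ one has $\sum_{k\in K_j}a_{nk}<1/j$ (possible since $\lim_n\sum_{k\in K_j}a_{nk}=0$), and then define $G$ so that $G\cap[v_j,v_{j+1})$ coincides with $K_j^c\cap[v_j,v_{j+1})$ for each $j$. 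One then verifies two things: first, $\delta_\mathcal{A}(\mathbb{N}\setminus G)=0$, by splitting the sum $\sum_{k\notin G}a_{nk}$ at the threshold $v_j$ where $v_j\le n<v_{j+1}$ into the tail part (bounded by $\sum_{k\in K_j}a_{nk}<1/j$) and the finitely many early blocks (whose $\mathcal{A}$-mass tends to $0$ as $n\to\infty$ by condition (ii) of regularity, applied blockwise); second, $d_{q_k}\to 0$ along $G$, since for $k\in G$ with $k\ge v_j$ we have $k\notin K_j$, i.e. $\mathcal{F}_{x_k\mathcal{L}}(1/j)>1-1/j$, which by Theorem \ref{thm61} gives $d_L(\mathcal{F}_{x_k\mathcal{L}},\varepsilon_0)<1/j$. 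This yields $\mathcal{F}\text{-}\lim_{k\to\infty}x_{q_k}=\mathcal{L}$, completing the proof. The delicate point to get right is the bound on $\sum_{k\notin G}a_{nk}$: one must handle the "early blocks" uniformly, which is where I would invoke that for each fixed finite initial segment the $\mathcal{A}$-mass goes to zero, together with a careful choice making the $v_j$ grow fast enough.
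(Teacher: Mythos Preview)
Your overall strategy matches the paper's: reduce to the real sequence $d_k=d_L(\mathcal{F}_{x_k\mathcal{L}},\varepsilon_0)$ via Remark~\ref{rem61} and Theorem~\ref{thm61}, and in the forward direction build $G$ blockwise from the level sets at scales $1/j$ by a threshold-based diagonal construction. The converse direction you sketch is exactly the paper's.

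There is, however, one point where your argument as written does not go through. You work with the \emph{increasing} ``bad'' sets $K_j=\{k:\mathcal{F}_{x_k\mathcal{L}}(1/j)\le 1-1/j\}$ and claim that, for $v_j\le n<v_{j+1}$, the tail $\sum_{k\notin G,\,k\ge v_j}a_{nk}$ is bounded by $\sum_{k\in K_j}a_{nk}$. But $(\mathbb{N}\setminus G)\cap[v_j,\infty)=\bigcup_{m\ge j}\bigl(K_m\cap[v_m,v_{m+1})\bigr)$, and since $K_m\supset K_j$ for $m>j$ this set is \emph{not} contained in $K_j$; the containment, and hence the inequality, runs the wrong way for a general nonnegative regular matrix. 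Your closing remark about ``choosing the $v_j$ to grow fast enough'' does not repair this, because the problem is with indices $k$ lying \emph{beyond} $v_{j+1}$, over which the choice of $v_j$ gives no control. The paper avoids this pitfall by working instead with the decreasing complements $G_t=K_t^c$ and bounding $\sum_{k\in G}a_{nk}$ from \emph{below}: because $G_1\supset G_2\supset\cdots$, the blockwise construction yields $G\supset G_t$ on the relevant range, and the paper records directly $\sum_{k\in G}a_{nk}\ge\sum_{k\in G_t}a_{nk}>(t-1)/t$ for $u_t\le n<u_{t+1}$, the thresholds $u_t$ having been chosen precisely so that this last inequality holds. If you recast your estimate in terms of the $G_t$'s (equivalently, use that $G^c\cap[1,v_{j+1})\subset K_j$ rather than trying to bound $G^c\cap[v_j,\infty)$), your argument becomes the paper's.
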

\begin{proof}
Let us assume that $st_\mathcal{A}^\mathcal{F}\mbox{-}\lim\limits_{k\rightarrow\infty}x_k=\mathcal{L}$. Then for each $t\in\mathbb{N}$, let
$$E_t=\{k\in \mathbb{N}:d_L(\mathcal{F}_{x_k\mathcal{L}},\varepsilon_0)\geq\frac{1}{t}\}$$
and
$$G_t=\{k\in \mathbb{N}:d_L(\mathcal{F}_{x_k\mathcal{L}},\varepsilon_0)<\frac{1}{t}\}.$$

Then we have $\delta_\mathcal{A}(E_t)=0$. Also by construction of $G_t$ for each $t\in \mathbb{N}$ we have
$G_1\supset G_2\supset G_3\supset...\supset G_m\supset G_{m+1}\supset...$ with $\delta_\mathcal{A}(G_t)=1$ for each $t\in\mathbb{N}$.

Let $u_1\in G_1$. As $\delta_\mathcal{A}(G_2)=1$, so there exists $u_2\in G_2$ with $u_2>u_1$ such that for
each $n\geq u_2$, $\sum\limits_{k\in G_2}a_{nk}>\frac{1}{2}$.

Again, as $\delta_\mathcal{A}(G_3)=1$, so there exists $u_3\in G_3$ with $u_3>u_2$ such that for each
$n\geq u_3$, $\sum\limits_{k\in G_3}a_{nk}>\frac{2}{3}$.

Thus continuing the above process we get a strictly increasing sequence $\{u_t\}_{t\in\mathbb{N}}$ of
positive integers such that $u_t\in G_t$ for each $t\in\mathbb{N}$ and
$$\sum\limits_{k\in G_t}a_{nk}>\frac{t-1}{t}, \hspace{1in} \text{for each}~n\geq u_t, t\in\mathbb{N}.$$

We now define a set $G$ as follows
$$G=\biggl\{k\in\mathbb{N}:k\in[1,u_1]\biggr\}\bigcup\biggl\{\bigcup\limits_{t\in\mathbb{N}}\{k\in\mathbb{N}:
k\in[u_t,u_{t+1}]~\text{and}~k\in G_t\}\biggr\}.$$

Then, for each $n$, $u_t\leq n<u_{t+1}$, we have

$$\sum\limits_{k\in G}a_{nk}\geq\sum\limits_{k\in G_t}a_{nk}>\frac{t-1}{t}.$$

Therefore $\delta_\mathcal{A}(G)=1$.

Let $\eta>0$. We choose $l\in\mathbb{N}$ such that
$\frac{1}{l}<\eta$. Let $n\geq u_l$, $n\in G$. Then there exists a
natural number $r\geq l$ such that $u_r\leq n<u_{r+1}$. Then by
the construction of $G$, $n\in G_r$. So,
$$d_L(\mathcal{F}_{x_n\mathcal{L}},\varepsilon_0)<\frac{1}{r}\leq\frac{1}{l}<\eta.$$

Thus $d_L(\mathcal{F}_{x_n\mathcal{L}},\varepsilon_0)<\eta$ for each $n\in G,~n\geq u_l$. Hence
$\mathcal{F}\mbox{-}\lim\limits_{\stackrel{\stackrel{k\rightarrow\infty}{k\in G}}~}x_k=\mathcal{L}$. Writing
$G=\{q_1<q_2<...\}$ we have $\delta_\mathcal{A}(G)=1$ and $\mathcal{F}\mbox{-}\lim\limits_{n\rightarrow\infty}x_{q_n}=\mathcal{L}$.

Conversely, let there exists a subset $G=\{q_1<q_2<...\}$ of $\mathbb{N}$ such
that $\delta_\mathcal{A}(G)=1$ and $\mathcal{F}\mbox{-}\lim\limits_{n\rightarrow\infty}x_{q_n}=\mathcal{L}(\in X)$. Then
for each $t>0$, there is an $N_0\in\mathbb{N}$ so that
$$ \mathcal{F}_{x_{q_n}\mathcal{L}}(t)>1-t, \hspace{1in} \forall~ n\geq N_0,$$
i.e.,
$$d_L(\mathcal{F}_{x_{q_n}\mathcal{L}},\varepsilon_0)<t, \hspace{1in} \forall~ n\geq N_0.$$
Let $E_t=\{n\in
\mathbb{N}:d_L(\mathcal{F}_{x_{q_n}\mathcal{L}},\varepsilon_0)\geq
t\}$. Then $E_t\subset\mathbb{N} \setminus \{q_{ _{N_0+1}},q_{
_{N_0+2}},...\}$. Now $\delta_\mathcal{A}(\mathbb{N} \setminus \{q_{
_{N_0+1}}, q_{ _{N_0+2}},...\}) = 0$ and so $\delta_\mathcal{A}(E_t)=0$.

Therefore,
$st_\mathcal{A}^\mathcal{F}\mbox{-}\lim\limits_{k\rightarrow\infty}x_k=\mathcal{L}$.
\end{proof}

\begin{thm}\label{thm68}
Let $(X,\mathcal{F},\tau)$ be a PM space and $x=\{x_k\}_{k \in \mathbb{N}}$ be a sequence in $X$. Then
$x_k\xrightarrow{st_\mathcal{A}^\mathcal{F}}\mathcal{L}$ if and only if there exists a
sequence $\{g_k\}_{k\in\mathbb{N}}$ such that $x_k=g_k$ for a.a.$k(\mathcal{A})$ and $g_k\xrightarrow{\mathcal{F}}\mathcal{L}$.
\end{thm}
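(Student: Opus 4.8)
The plan is to obtain the equivalence from Theorem \ref{thm67}, which already encodes the passage between strong $\mathcal{A}$-statistical convergence and ordinary strong convergence along an index set of $\mathcal{A}$-density one.

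For the forward implication I would start from $x_k \xrightarrow{st_\mathcal{A}^\mathcal{F}} \mathcal{L}$ and apply Theorem \ref{thm67} to get $G = \{q_1 < q_2 < \cdots\} \subseteq \mathbb{N}$ with $\delta_\mathcal{A}(G) = 1$ and $\mathcal{F}\text{-}\lim_n x_{q_n} = \mathcal{L}$. Then I define $g_k = x_k$ for $k \in G$ and $g_k = \mathcal{L}$ for $k \notin G$. Since $\{k : x_k \neq g_k\} \subseteq \mathbb{N}\setminus G$ and $\delta_\mathcal{A}(\mathbb{N}\setminus G) = 0$, we have $x_k = g_k$ for a.a. $k(\mathcal{A})$. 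It remains to verify $g_k \xrightarrow{\mathcal{F}} \mathcal{L}$: for fixed $t > 0$, strong convergence of the subsequence gives a natural number $N_0$ with $x_{q_n} \in \mathcal{N}_\mathcal{L}(t)$ whenever $n \geq N_0$, while for $k \notin G$ we have $g_k = \mathcal{L} \in \mathcal{N}_\mathcal{L}(t)$ automatically since $\mathcal{F}_{\mathcal{L}\mathcal{L}} = \varepsilon_0$ and $\varepsilon_0(t) = 1 > 1 - t$; hence $g_k \in \mathcal{N}_\mathcal{L}(t)$ for every $k \geq q_{N_0}$, which is exactly strong convergence.

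For the converse I would assume $x_k = g_k$ for a.a. $k(\mathcal{A})$ with $g_k \xrightarrow{\mathcal{F}} \mathcal{L}$, put $K = \{k : x_k \neq g_k\}$ so that $\delta_\mathcal{A}(K) = 0$, and fix $t > 0$. Choosing $N_0$ with $\mathcal{F}_{g_k\mathcal{L}}(t) > 1 - t$ for all $k \geq N_0$, I get the inclusion $\{k : \mathcal{F}_{x_k\mathcal{L}}(t) \leq 1 - t\} \subseteq K \cup \{1, \ldots, N_0 - 1\}$, because for $k \geq N_0$ with $k \notin K$ we have $x_k = g_k$. Since a finite set has $\mathcal{A}$-density zero (by condition (ii) of the Silverman--Toeplitz theorem) and $\mathcal{A}$-density-zero sets are closed under finite unions, the right-hand side has $\mathcal{A}$-density zero, hence so does the left-hand side; this is precisely $x_k \xrightarrow{st_\mathcal{A}^\mathcal{F}} \mathcal{L}$.

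I do not expect a genuine obstacle once Theorem \ref{thm67} is available; the only delicate points are checking that redefining $g_k = \mathcal{L}$ off $G$ does not destroy strong convergence --- handled by the trivial membership $\mathcal{L} \in \mathcal{N}_\mathcal{L}(t)$ --- and recalling that altering a sequence, or an index set, on a set of $\mathcal{A}$-density zero (in particular a finite set) leaves $\mathcal{A}$-density unchanged. If one prefers to avoid citing Theorem \ref{thm67}, the forward direction can be reconstructed directly via the nested sets $E_t = \{k : d_L(\mathcal{F}_{x_k\mathcal{L}}, \varepsilon_0) \geq 1/t\}$ exactly as in the proof of that theorem, but the route above is shorter.
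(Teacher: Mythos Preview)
Your proposal is correct and follows essentially the same approach as the paper. The forward direction is identical---both invoke Theorem \ref{thm67} and define $g_k$ piecewise as $x_k$ on $G$ and $\mathcal{L}$ off $G$; your argument is in fact more explicit in verifying $g_k\xrightarrow{\mathcal{F}}\mathcal{L}$, which the paper leaves as ``clearly''. For the converse, the paper writes the same inclusion $\{k:x_k\notin\mathcal{N}_\mathcal{L}(t)\}\subseteq\{k:x_k\neq g_k\}\cup\{k:g_k\notin\mathcal{N}_\mathcal{L}(t)\}$ at the level of partial sums $\sum a_{nk}$ rather than as a set containment, but the underlying reasoning---finite sets have $\mathcal{A}$-density zero and density-zero sets are closed under finite union---is exactly yours.
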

\begin{proof}
Let $x_k\xrightarrow{st_\mathcal{A}^\mathcal{F}}\mathcal{L}$. Then
by Theorem \ref{thm67}, there is a set
$G=\{q_1<q_2<...<q_n<...\}\subset\mathbb{N}$ such that
$\delta_\mathcal{A}(G)=1$ and
$\mathcal{F}\mbox{-}\lim\limits_{n\rightarrow\infty} x_{q_n} =
\mathcal{L}$.

We now define a sequence $\{g_k\}_{k\in\mathbb{N}}$ as follows:
\[ g_{k} = \left\{
  \begin{array}{l l}
    x_k, & \quad \text{if}~ k\in G \\
    \mathcal{L}, & \quad \text{if}~ k\notin G.
  \end{array} \right.\]\\
Then clearly, $g_k\xrightarrow{\mathcal{F}}\mathcal{L}$ and also
$\delta_\mathcal{A}(\{k\in\mathbb{N}:x_k\neq g_k\})=0$ i.e., $x_k=g_k$ for $a.a.k(\mathcal{A})$.

Conversely, there exists a sequence $\{g_k\}_{k\in\mathbb{N}}$ such that $x_k=g_k$ for a.a.$k(\mathcal{A})$ and
$g_k\xrightarrow{\mathcal{F}}\mathcal{L}$. Let $t>0$ be given. Since $\mathcal{A}$ is a nonnegative
regular summability matrix so there exists an $N_0\in\mathbb{N}$ such that for each $n\geq N_0$, we get
$$\sum\limits_{x_k\notin \mathcal{N}_\mathcal{L}(t)}a_{nk}\leq\sum\limits_{x_k\neq g_k}a_{nk}+
\sum\limits_{g_k\notin \mathcal{N}_\mathcal{L}(t)}a_{nk}.$$

As $\{g_k\}_{k\in\mathbb{N}}$ is strongly convergent to $\mathcal{L}$, so the set
$\{k\in\mathbb{N}:g_k\notin\mathcal{N}_\mathcal{L}(t)\}$ is finite and
so $\delta_\mathcal{A}(\{k\in \mathbb{N}:g_k\notin\mathcal{N}_\mathcal{L}(t)\})=0$.\\
Thus,
\begin{eqnarray*}
&& ~~~\delta_\mathcal{A}(\{k\in \mathbb{N}:x_k\notin\mathcal{N}_\mathcal{L}(t)\})\\
&& \leq \delta_\mathcal{A}(\{k\in \mathbb{N}:x_k\neq g_k\})+ \delta_\mathcal{A}(\{k\in \mathbb{N}:g_k\notin\mathcal{N}_\mathcal{L}(t)\})=0.
\end{eqnarray*}
Hence, $\delta_\mathcal{A}(\{k\in \mathbb{N}:x_k\notin\mathcal{N}_\mathcal{L}(t)\})=0$. Therefore, the
sequence $\{x_k\}_{k\in\mathbb{N}}$ is strongly $\mathcal{A}$-statistically convergent to $\mathcal{L}$.
\end{proof}

We now study some properties of strong $\mathcal{A}$-statistical
Cauchyness in a PM space. For this we first prove the following
lemma in a metric space.

\begin{lem}\label{lem61}
Let $(X,\rho)$ be a metric space and $x=\{x_k\}_{k\in\mathbb{N}}$ be a sequence in $X$. Then the following statements are equivalent:
\begin{enumerate}
\item $x$ is an $\mathcal{A}$-statistically Cauchy sequence. \item
For all $\gamma>0$, there is a set $\mathcal{M}\subset\mathbb{N}$
such that $\delta_\mathcal{A}(\mathcal{M})=0$ and
$\rho(x_m,x_n)<\gamma$ for all $m,n\notin \mathcal{M}$. \item For
every $\gamma>0$,
$\delta_\mathcal{A}(\{j\in\mathbb{N}:\delta_\mathcal{A}(D_j(\gamma))\neq
0\})=0$, where $D_j(\gamma)=\{k\in\mathbb{N}:\rho(x_k,x_j)\geq
\gamma\}$, $j\in\mathbb{N}$.
\end{enumerate}
\end{lem}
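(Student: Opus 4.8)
The plan is to prove the chain of implications $(1)\Rightarrow(2)\Rightarrow(3)\Rightarrow(1)$, exploiting that $\mathcal{A}$-density zero sets form the admissible ideal $\mathcal{I}_{\mathcal{A}}$, which is closed under finite unions and whose complements have density one.

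\textbf{$(1)\Rightarrow(2)$.} Assume $x$ is $\mathcal{A}$-statistically Cauchy. Fix $\gamma>0$ and choose $k_0=k_0(\gamma/2)$ so that $\delta_\mathcal{A}(\{k:\rho(x_k,x_{k_0})\geq\gamma/2\})=0$. Set $\mathcal{M}=\{k\in\mathbb{N}:\rho(x_k,x_{k_0})\geq\gamma/2\}$. Then $\delta_\mathcal{A}(\mathcal{M})=0$, and for $m,n\notin\mathcal{M}$ the triangle inequality gives $\rho(x_m,x_n)\leq\rho(x_m,x_{k_0})+\rho(x_{k_0},x_n)<\gamma$.

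\textbf{$(2)\Rightarrow(3)$.} Fix $\gamma>0$ and apply $(2)$ with $\gamma/2$ to obtain $\mathcal{M}$ with $\delta_\mathcal{A}(\mathcal{M})=0$ and $\rho(x_m,x_n)<\gamma/2$ for all $m,n\notin\mathcal{M}$. I claim $\{j\in\mathbb{N}:\delta_\mathcal{A}(D_j(\gamma))\neq 0\}\subset\mathcal{M}$. Indeed, if $j\notin\mathcal{M}$, then for every $k\notin\mathcal{M}$ we have $\rho(x_k,x_j)<\gamma/2<\gamma$, so $k\notin D_j(\gamma)$; hence $D_j(\gamma)\subset\mathcal{M}$, giving $\delta_\mathcal{A}(D_j(\gamma))=0$. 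Thus the set in question is contained in a density-zero set, so has density zero.

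\textbf{$(3)\Rightarrow(1)$.} Fix $\gamma>0$. By $(3)$, the set $P=\{j:\delta_\mathcal{A}(D_j(\gamma))\neq 0\}$ has $\delta_\mathcal{A}(P)=0$, so $P^c$ is nonempty (in fact $\delta_\mathcal{A}(P^c)=1$); pick any $k_0\in P^c$. Then $\delta_\mathcal{A}(D_{k_0}(\gamma))=0$, i.e.\ $\delta_\mathcal{A}(\{k:\rho(x_k,x_{k_0})\geq\gamma\})=0$, which is exactly the $\mathcal{A}$-statistical Cauchy condition for this $\gamma$; since $\gamma>0$ was arbitrary, $x$ is $\mathcal{A}$-statistically Cauchy.

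The only subtle point is in $(3)\Rightarrow(1)$: one must know that a set of $\mathcal{A}$-density zero cannot be all of $\mathbb{N}$, so that an admissible witness $k_0$ actually exists outside $P$. This is immediate since $\delta_\mathcal{A}(P)=0$ forces $\delta_\mathcal{A}(P^c)=1\neq 0$, as recorded in the preliminaries ($\delta_\mathcal{A}(B^c)=1$ whenever $\delta_\mathcal{A}(B)=0$). Everything else is a routine application of the triangle inequality together with the additivity/hereditariness of density-zero sets, so no genuine obstacle arises; the proof transfers verbatim to the PM-space setting later by substituting $\rho(\cdot,\cdot)$ with $d_L(\mathcal{F}_{\cdot\,\cdot},\varepsilon_0)$ and invoking Note \ref{no61} in place of the metric triangle inequality.
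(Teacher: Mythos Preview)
The paper states Lemma~\ref{lem61} without proof, so there is nothing to compare your argument against; your chain $(1)\Rightarrow(2)\Rightarrow(3)\Rightarrow(1)$ is correct and is the standard route. One minor remark: in $(2)\Rightarrow(3)$ the halving to $\gamma/2$ is superfluous (applying $(2)$ with $\gamma$ itself already yields $D_j(\gamma)\subset\mathcal{M}$ for $j\notin\mathcal{M}$), but this does no harm.
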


\begin{thm}\label{thm69}
Let $(X,\mathcal{F},\tau)$ be a PM space and $x=\{x_k\}_{k\in\mathbb{N}}$ be a sequence in $X$. If $x$ is
strongly $\mathcal{A}$-statistically convergent, then $x$ is strong $\mathcal{A}$-statistically Cauchy.
\end{thm}

\begin{proof}
The proof directly follows from Theorem 3.5 \cite{Se2}, by taking
$\mathcal I = \mathcal{I}_{\mathcal A}$.
\end{proof}

\begin{thm}\label{thm610}
Let $(X,\mathcal{F},\tau)$ be a PM space and $x=\{x_k\}_{k\in\mathbb{N}}$ be a sequence in $X$. If the sequence $x$ is
strong $\mathcal{A}$-statistically Cauchy, then for each $t>0$, there is a set $\mathcal{P}_t\subset \mathbb{N}$ with
$\delta_\mathcal{A}(\mathcal{P}_t)=0$ such that $\mathcal{F}_{x_mx_j}(t)>1-t$ for any $m,j\notin \mathcal{P}_t$.
\end{thm}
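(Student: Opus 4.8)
The plan is to carry over to the probabilistic setting the implication $(1)\Rightarrow(2)$ of Lemma \ref{lem61}; the only feature that replaces the triangle inequality of a metric is the consequence of the continuity of $\tau$ recorded in Note \ref{no61}. Throughout I use Theorem \ref{thm61} freely to switch between the inequalities $\mathcal{F}_{ab}(s)>1-s$ and $d_L(\mathcal{F}_{ab},\varepsilon_0)<s$.

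Fix $t>0$. First I would apply Note \ref{no61} to produce an $\alpha$ with $0<\alpha\le t$ such that $d_L(\mathcal{F}_{pq},\varepsilon_0)<\alpha$ and $d_L(\mathcal{F}_{qr},\varepsilon_0)<\alpha$ together imply $d_L(\mathcal{F}_{pr},\varepsilon_0)<t$. Since $x$ is strong $\mathcal{A}$-statistically Cauchy, Definition 3.2 together with Note 3.1(ii), applied to this $\alpha$, furnishes a natural number $N_0=N_0(\alpha)$ such that the set
$$\mathcal{P}_t:=\{k\in\mathbb{N}:\mathcal{F}_{x_kx_{N_0}}(\alpha)\le 1-\alpha\}=\{k\in\mathbb{N}:d_L(\mathcal{F}_{x_kx_{N_0}},\varepsilon_0)\ge\alpha\}$$
satisfies $\delta_\mathcal{A}(\mathcal{P}_t)=0$; the second description of $\mathcal{P}_t$ comes from Theorem \ref{thm61}. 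This $\mathcal{P}_t$ will be the required exceptional set.

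Finally I would verify the uniform estimate off $\mathcal{P}_t$: if $m,j\notin\mathcal{P}_t$, then $d_L(\mathcal{F}_{x_mx_{N_0}},\varepsilon_0)<\alpha$ and, using the symmetry axiom (P-3), $d_L(\mathcal{F}_{x_{N_0}x_j},\varepsilon_0)=d_L(\mathcal{F}_{x_jx_{N_0}},\varepsilon_0)<\alpha$, so the choice of $\alpha$ gives $d_L(\mathcal{F}_{x_mx_j},\varepsilon_0)<t$, i.e. $\mathcal{F}_{x_mx_j}(t)>1-t$ by Theorem \ref{thm61}. Since $\delta_\mathcal{A}(\mathcal{P}_t)=0$, this completes the argument. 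I do not expect a genuine obstacle here; the one point deserving care is that the threshold defining $\mathcal{P}_t$ must be the radius $\alpha$ supplied by Note \ref{no61} rather than $t$ itself, because $d_L(\mathcal{F}_{\cdot\,\cdot},\varepsilon_0)$ does not satisfy a true triangle inequality, so feeding $t$ directly into the Cauchy hypothesis would leave the final estimate one step short.
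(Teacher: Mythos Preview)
Your proposal is correct and follows essentially the same argument as the paper: both pick, via Note \ref{no61}, a radius $\alpha$ (the paper calls it $\gamma$) for which the vicinity composition gives the $t$-estimate, apply the strong $\mathcal{A}$-statistical Cauchy condition at level $\alpha$ to obtain the anchor index $N_0$ (the paper's $k_0$), and set $\mathcal{P}_t$ to be the resulting exceptional set. Your explicit switching between the $\mathcal{F}_{ab}(s)>1-s$ and $d_L(\mathcal{F}_{ab},\varepsilon_0)<s$ formulations via Theorem \ref{thm61}, and your remark about why one must feed $\alpha$ rather than $t$ into the Cauchy hypothesis, are useful clarifications but do not alter the route.
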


\begin{proof}
Let $x$ be strong $\mathcal{A}$-statistically Cauchy. Let $t>0$. Then there exists a $\gamma=\gamma(t)>0$ such that,
$$\mathcal{F}_{lr}(t)>1-t ~\text{whenever}~ \mathcal{F}_{lj}(\gamma)>1-\gamma ~\text{and}~ \mathcal{F}_{jr}(\gamma)>1-\gamma.$$

As $x$ is strong $\mathcal{A}$-statistically Cauchy, so there is an $k_0=k_0(\gamma)\in\mathbb{N}$ such that
$$\delta_\mathcal{A}(\{k\in \mathbb{N}:\mathcal{F}_{x_kx_{k_0}}(\gamma)\leq 1-\gamma\})=0.$$
Let $\mathcal{P}_t=\{m\in \mathbb{N}:\mathcal{F}_{x_mx_{k_0}}(\gamma)\leq 1-\gamma\}$. Then $\delta_\mathcal{A}(\mathcal{P}_t)=0$
and $\mathcal{F}_{x_mx_{k_0}}(\gamma)>1-\gamma$ and $\mathcal{F}_{x_jx_{k_0}}(\gamma)>1-\gamma$ for $m,j\notin \mathcal{P}_t$.
Hence for every $t>0$, there is a set $\mathcal{P}_t\subset\mathbb{N}$ with $\delta_\mathcal{A}(\mathcal{P}_t)=0$ such
that $\mathcal{F}_{x_mx_j}(t)>1-t$ for every $m,j\notin \mathcal{P}_t$.
\end{proof}

\begin{cor}\label{cor61}
Let $(X,\mathcal{F},\tau)$ be a PM space and $x=\{x_k\}_{k\in\mathbb{N}}$ be a sequence in $X$. If $x$ is
strong $\mathcal{A}$-statistically Cauchy, then for each $t>0$, there is a set $\mathcal{Q}_t\subset \mathbb{N}$ with
$\delta_\mathcal{A}(\mathcal{Q}_t)=1$ such that $\mathcal{F}_{x_mx_j}(t)>1-t$ for any $m,j\in \mathcal{Q}_t$.
\end{cor}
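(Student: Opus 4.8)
The plan is to derive Corollary~\ref{cor61} directly from Theorem~\ref{thm610} by passing to complements. Fix $t>0$. By Theorem~\ref{thm610} there is a set $\mathcal{P}_t\subset\mathbb{N}$ with $\delta_\mathcal{A}(\mathcal{P}_t)=0$ such that $\mathcal{F}_{x_mx_j}(t)>1-t$ for all $m,j\notin\mathcal{P}_t$. Set $\mathcal{Q}_t=\mathbb{N}\setminus\mathcal{P}_t=\mathcal{P}_t^c$.

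First I would record that $\delta_\mathcal{A}(\mathcal{Q}_t)=\delta_\mathcal{A}(\mathcal{P}_t^c)=1$; this is exactly the fact noted in the introduction that $\delta_\mathcal{A}(B^c)=1$ whenever $\delta_\mathcal{A}(B)=0$. Next, observe that $m,j\in\mathcal{Q}_t$ is precisely the condition $m,j\notin\mathcal{P}_t$, so the conclusion $\mathcal{F}_{x_mx_j}(t)>1-t$ for all $m,j\in\mathcal{Q}_t$ is just a restatement of the corresponding clause in Theorem~\ref{thm610}. Since $t>0$ was arbitrary, this establishes the claim for every $t>0$.

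There is essentially no obstacle here: the corollary is a cosmetic reformulation of Theorem~\ref{thm610}, trading the "$\mathcal{A}$-density zero exceptional set" phrasing for the dual "$\mathcal{A}$-density one good set" phrasing. The only point requiring any care is invoking $\delta_\mathcal{A}(\mathcal{P}_t^c)=1$, which is immediate from the remarks on $\mathcal{A}$-density in Section~1. I would write the proof in two or three sentences along these lines.

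\begin{proof}
Let $t>0$ be given. By Theorem \ref{thm610}, there is a set $\mathcal{P}_t\subset\mathbb{N}$ with $\delta_\mathcal{A}(\mathcal{P}_t)=0$ such that $\mathcal{F}_{x_mx_j}(t)>1-t$ for any $m,j\notin\mathcal{P}_t$. Put $\mathcal{Q}_t=\mathbb{N}\setminus\mathcal{P}_t$. Then $\delta_\mathcal{A}(\mathcal{Q}_t)=\delta_\mathcal{A}(\mathcal{P}_t^c)=1$, and for any $m,j\in\mathcal{Q}_t$ we have $m,j\notin\mathcal{P}_t$, so $\mathcal{F}_{x_mx_j}(t)>1-t$. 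Since $t>0$ was arbitrary, the proof is complete.
\end{proof}
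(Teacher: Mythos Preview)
Your proof is correct and is exactly the approach the paper intends: the corollary is stated immediately after Theorem~\ref{thm610} without proof, precisely because it is obtained by taking $\mathcal{Q}_t=\mathcal{P}_t^c$ and using $\delta_\mathcal{A}(\mathcal{P}_t^c)=1$.
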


\begin{thm}\label{thm611}
Let $(X,\mathcal{F},\tau)$ be a PM space and $x=\{x_k\}_{k\in\mathbb{N}}$, $y=\{y_k\}_{k\in\mathbb{N}}$ be
two strong $\mathcal{A}$-statistically Cauchy sequences in $X$. Then $\{\mathcal{F}_{{x_k}{y_k}}\}_{k\in\mathbb{N}}$ is
an $\mathcal{A}$-statistically Cauchy sequence in $(\mathcal{D}^+,d_L)$.
\end{thm}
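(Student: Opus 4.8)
The plan is to reduce the statement to the uniform continuity of the assignment $(a,b)\mapsto\mathcal{F}_{ab}$ from $X\times X$ (equipped with the strong uniformity) into the metric space $(\mathcal{D}^{+},d_{L})$, and then to feed through it the ``density one'' description of strong $\mathcal{A}$-statistical Cauchyness furnished by Corollary \ref{cor61}. So the first step is to record the following auxiliary estimate: for every $\varepsilon>0$ there is $\delta=\delta(\varepsilon)>0$ such that, for all $a,b,c,d\in X$,
$$d_{L}(\mathcal{F}_{ac},\varepsilon_{0})<\delta\ \ \text{and}\ \ d_{L}(\mathcal{F}_{bd},\varepsilon_{0})<\delta\ \ \Longrightarrow\ \ d_{L}(\mathcal{F}_{ab},\mathcal{F}_{cd})<\varepsilon.$$
This is a consequence of the continuity of $\tau$ and the compactness of $\mathcal{D}^{+}$ (cf. \cite{Sh5}; it is also implicit in Theorem 3.1 of \cite{Se2}): applying (P-4) twice one obtains the pointwise bounds $\mathcal{F}_{ab}\ge\tau(\mathcal{F}_{ac},\tau(\mathcal{F}_{cd},\mathcal{F}_{db}))$ and, symmetrically, $\mathcal{F}_{cd}\ge\tau(\mathcal{F}_{ac},\tau(\mathcal{F}_{ab},\mathcal{F}_{bd}))$; since $\tau$ is uniformly continuous on the compact set $\mathcal{D}^{+}\times\mathcal{D}^{+}$ and $\tau(\varepsilon_{0},\,\cdot\,)$ is the identity, each right-hand side lies within $d_{L}$-distance less than $\varepsilon$ of $\mathcal{F}_{cd}$, respectively of $\mathcal{F}_{ab}$, as soon as $\delta$ is small enough (uniformly in the remaining arguments), and inserting these two pointwise lower bounds into the definition of $d_{L}$ then upgrades them to the two-sided bound $d_{L}(\mathcal{F}_{ab},\mathcal{F}_{cd})<\varepsilon$.

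Granting this, the rest is quick. Fix $\gamma>0$ and choose $\delta>0$ for $\varepsilon=\gamma$ as above. Since $x$ and $y$ are strong $\mathcal{A}$-statistically Cauchy, Corollary \ref{cor61} (applied with $t=\delta$) yields sets $\mathcal{Q},\mathcal{R}\subset\mathbb{N}$ with $\delta_{\mathcal{A}}(\mathcal{Q})=\delta_{\mathcal{A}}(\mathcal{R})=1$ such that $\mathcal{F}_{x_{m}x_{j}}(\delta)>1-\delta$ for all $m,j\in\mathcal{Q}$ and $\mathcal{F}_{y_{m}y_{j}}(\delta)>1-\delta$ for all $m,j\in\mathcal{R}$; by Theorem \ref{thm61} these read $d_{L}(\mathcal{F}_{x_{m}x_{j}},\varepsilon_{0})<\delta$ on $\mathcal{Q}$ and $d_{L}(\mathcal{F}_{y_{m}y_{j}},\varepsilon_{0})<\delta$ on $\mathcal{R}$. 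Put $\mathcal{S}=\mathcal{Q}\cap\mathcal{R}$ and $\mathcal{M}=\mathbb{N}\setminus\mathcal{S}=(\mathbb{N}\setminus\mathcal{Q})\cup(\mathbb{N}\setminus\mathcal{R})$; then $\delta_{\mathcal{A}}(\mathcal{M})=0$. For any $m,n\in\mathcal{S}$ the auxiliary estimate, applied with $(a,b,c,d)=(x_{m},y_{m},x_{n},y_{n})$, gives $d_{L}(\mathcal{F}_{x_{m}y_{m}},\mathcal{F}_{x_{n}y_{n}})<\gamma$.

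Thus for every $\gamma>0$ there is a set $\mathcal{M}\subset\mathbb{N}$ with $\delta_{\mathcal{A}}(\mathcal{M})=0$ and $d_{L}(\mathcal{F}_{x_{m}y_{m}},\mathcal{F}_{x_{n}y_{n}})<\gamma$ for all $m,n\notin\mathcal{M}$, so Lemma \ref{lem61}, applied in the metric space $(\mathcal{D}^{+},d_{L})$ to the sequence $\{\mathcal{F}_{x_{k}y_{k}}\}_{k\in\mathbb{N}}$, shows that $\{\mathcal{F}_{x_{k}y_{k}}\}_{k\in\mathbb{N}}$ is $\mathcal{A}$-statistically Cauchy. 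The only genuinely non-routine ingredient is the auxiliary uniform-continuity estimate; everything after it is $\mathcal{A}$-density bookkeeping, which is why I would record that estimate as a separate lemma (or simply cite \cite{Sh5}) rather than unfold its proof inside this one.
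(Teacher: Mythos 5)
Your proposal is correct and follows essentially the same route as the paper's proof: apply Corollary \ref{cor61} to each sequence, intersect the two $\mathcal{A}$-density-one sets, invoke the uniform continuity of $(a,b)\mapsto\mathcal{F}_{ab}$ into $(\mathcal{D}^{+},d_{L})$, and finish with Lemma \ref{lem61}. The only difference is that the paper simply cites this uniform continuity as known, whereas you unfold its proof from (P-4) and the continuity of $\tau$ --- a reasonable addition, best recorded as a separate lemma as you suggest.
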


\begin{proof}
As $x$ and $y$ are strong $\mathcal{A}$-statistically Cauchy
sequences, so by corollary \ref{cor61}, for every $\gamma>0$ there
are $\mathcal{U}_\gamma, \mathcal{V}_\gamma\subset\mathbb{N}$ with
$\delta_\mathcal{A}(\mathcal{U}_\gamma)=\delta_\mathcal{A}(\mathcal{V}_\gamma)=1$,
so that $\mathcal{F}_{x_mx_j}(\gamma)>1-\gamma$ holds for any
$m,j\in \mathcal{U}_\gamma$ and
$\mathcal{F}_{y_ny_z}(\gamma)>1-\gamma$ holds for any $n,z\in
\mathcal{V}_\gamma$. Let
$\mathcal{W}_\gamma=\mathcal{U}_\gamma\cap \mathcal{V}_\gamma$.
Then $\delta_\mathcal{A}(\mathcal{W}_\gamma)=1$. So, for every
$\gamma>0$, there is a set $\mathcal{W}_\gamma\subset\mathbb{N}$
with $\delta_\mathcal{A}(\mathcal{W}_\gamma)=1$ so that
$\mathcal{F}_{x_px_q}(\gamma)>1-\gamma$ and
$\mathcal{F}_{y_py_q}(\gamma)>1-\gamma$ for any $p,q\in
\mathcal{W}_\gamma$. Now let $t>0$. Since $\mathcal{F}$ is
uniformly continuous so there exists a $\gamma(t)>0$ and hence a
set $\mathcal{W}_\gamma=\mathcal{W}_t\subset\mathbb{N}$ with
$\delta_\mathcal{A}(\mathcal{W}_t)=1$ so that
$d_L(\mathcal{F}_{x_py_p},\mathcal{F}_{x_qy_q})<t$ for any $p,q\in
\mathcal{W}_t$. Then the result follows from Lemma \ref{lem61}.
\end{proof}

\section{\textbf{Strong $\mathcal{A}$-statistical limit points and strong $\mathcal{A}$-statistical cluster points}}
In this section we study some basic properties of the notions of
strong $\mathcal{A}$-statistical limit points and strong
$\mathcal{A}$-statistical cluster points of a sequence in a PM
space including their interrelationship.

\begin{defn}\cite{Sh5}\label{def66}
Let $(X,\mathcal{F},\tau)$ be a PM space and
$x=\{x_k\}_{k\in\mathbb{N}}$ be a sequence in $X$. An element
$\mathcal{L}\in X$ is called a strong limit point of $x$, if there
is a subsequence of $x$ that strongly converges to $\mathcal{L}$.
\end{defn}
To denote the set of all strong limit points of any sequence $x$
in a PM space $(X,\mathcal{F},\tau)$ we use the notation
$L_x^\mathcal{F}$.


\begin{defn}\cite{Se2}\label{def67}
Let $(X,\mathcal{F},\tau)$ be a PM space and $\mathcal I$ be an
admissible ideal in $\mathbb{N}$ and $x=\{x_k\}_{k\in\mathbb{N}}$
be a sequence in $X$. An element $\zeta\in X$ is said to be a
strong $\mathcal{I}$-limit point of $x$, if there is a set $B=
\{b_1 < b_2 < ...\} \subset \mathbb{N}$ such that $ B \notin
\mathcal I$ and $\{x_{b_k}\}_{k \in \mathbb{N}}$ strongly
converges to $\zeta$.
\end{defn}

\begin{defn}\cite{Se2}\label{def68}
Let $(X,\mathcal{F},\tau)$ be a PM space and $\mathcal I$ be an
admissible ideal in $\mathbb{N}$ and $x=\{x_k\}_{k\in\mathbb{N}}$
be a sequence in $X$. An element $\nu\in X$ is said to be a strong
$\mathcal{I}$-cluster point of $x$, if for every $t>0$, the set
$\{k\in\mathbb N : x_k\in\mathcal{N}_\nu(t)\}) \notin \mathcal I$.
\end{defn}

\begin{note}
If $ \mathcal I = \mathcal{I}_{\mathcal A} = \{ B \subset
\mathbb{N}: \delta_{\mathcal A} (B) = 0\}$, where $\mathcal A =
(a_{nk})$ is an $\mathbb{N} \times \mathbb{N}$ non negative
regular summability matrix, then the notions of strong
$\mathcal{I}_{\mathcal A}$-limit point and strong
$\mathcal{I}_{\mathcal A}$-cluster point of sequences in a PM
space become the notions of strong $\mathcal A$-statistical limit
point and strong $\mathcal A$-statistical cluster point
respectively. Further, if $\mathcal A$ be the Cesaro matrix $C_1$,
the the notions of strong $\mathcal{I}_{C_1}$-limit point and
strong $\mathcal{I}_{C_1}$-cluster point of sequences in a PM
space become the notions of strong statistical limit point
\cite{Se} and strong statistical cluster point \cite{Se}
respectively.
\end{note}

We denote the set of all strong $\mathcal{A}$-statistical limit
points and strong $\mathcal{A}$-statistical cluster points of a
sequence $x$ in a PM space $(X,\mathcal{F},\tau)$ by
$\Lambda_x^{st}(\mathcal{A})_s^\mathcal{F}$ and
$\Gamma_x^{st}(\mathcal{A})_s^\mathcal{F}$ respectively.

\begin{thm}\label{thm616}
Let $(X,\mathcal{F},\tau)$ be a PM space and
$x=\{x_k\}_{k\in\mathbb{N}}$ be a sequence in $X$. Then
$\Lambda_x^{st}(\mathcal{A})_s^\mathcal{F}\subset
\Gamma_x^{st}(\mathcal{A})_s^\mathcal{F}\subset L_x^\mathcal{F}$.
\end{thm}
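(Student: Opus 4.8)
The plan is to prove the two inclusions $\Lambda_x^{st}(\mathcal{A})_s^\mathcal{F}\subset \Gamma_x^{st}(\mathcal{A})_s^\mathcal{F}$ and $\Gamma_x^{st}(\mathcal{A})_s^\mathcal{F}\subset L_x^\mathcal{F}$ separately, working directly from Definitions \ref{def66}, \ref{def67} and \ref{def68} with $\mathcal{I}=\mathcal{I}_{\mathcal{A}}$, so that ``$B\notin\mathcal{I}_{\mathcal{A}}$'' means $\delta_{\mathcal{A}}(B)\neq 0$.

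\textbf{First inclusion.} Let $\zeta\in\Lambda_x^{st}(\mathcal{A})_s^\mathcal{F}$. Then there is a set $B=\{b_1<b_2<\cdots\}\subset\mathbb{N}$ with $\delta_{\mathcal{A}}(B)\neq 0$ such that $\{x_{b_k}\}_{k\in\mathbb{N}}$ strongly converges to $\zeta$. Fix $t>0$. By strong convergence there is a natural number $k_0$ with $x_{b_k}\in\mathcal{N}_\zeta(t)$ for all $k\geq k_0$; hence $\{b_k:k\geq k_0\}\subset\{k\in\mathbb{N}:x_k\in\mathcal{N}_\zeta(t)\}$. Now $B\setminus\{b_k:k\geq k_0\}=\{b_1,\dots,b_{k_0-1}\}$ is finite, so $\delta_{\mathcal{A}}(\{b_k:k\geq k_0\})=\delta_{\mathcal{A}}(B)\neq 0$ (removing a finite set does not change the $\mathcal{A}$-density, since finite sets have $\mathcal{A}$-density zero and $\delta_{\mathcal{A}}$ is subadditive). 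Therefore $\{k\in\mathbb{N}:x_k\in\mathcal{N}_\zeta(t)\}$ contains a set of nonzero $\mathcal{A}$-density, so it itself has nonzero $\mathcal{A}$-density, i.e. it is not in $\mathcal{I}_{\mathcal{A}}$. Since $t>0$ was arbitrary, $\zeta\in\Gamma_x^{st}(\mathcal{A})_s^\mathcal{F}$.

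\textbf{Second inclusion.} Let $\nu\in\Gamma_x^{st}(\mathcal{A})_s^\mathcal{F}$. I will build a strongly convergent subsequence of $x$ with limit $\nu$ by a diagonal-type construction along the radii $t=1/m$. For each $m\in\mathbb{N}$ set $S_m=\{k\in\mathbb{N}:x_k\in\mathcal{N}_\nu(1/m)\}$; by the cluster-point hypothesis $\delta_{\mathcal{A}}(S_m)\neq 0$, and in particular each $S_m$ is infinite (a set of nonzero $\mathcal{A}$-density cannot be finite). Moreover $S_1\supset S_2\supset S_3\supset\cdots$ since the neighborhoods shrink. Pick $n_1\in S_1$; having chosen $n_1<n_2<\cdots<n_{m-1}$, use that $S_m$ is infinite to choose $n_m\in S_m$ with $n_m>n_{m-1}$. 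Then $\{x_{n_m}\}_{m\in\mathbb{N}}$ is a subsequence of $x$, and for any $t>0$ choose $m_0$ with $1/m_0<t$; for $m\geq m_0$ we have $x_{n_m}\in\mathcal{N}_\nu(1/m)\subset\mathcal{N}_\nu(1/m_0)\subset\mathcal{N}_\nu(t)$. Hence $\{x_{n_m}\}$ strongly converges to $\nu$, so by Definition \ref{def66}, $\nu\in L_x^\mathcal{F}$. Combining the two inclusions gives the claim.

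The routine points are that a finite (indeed, any density-zero) subset may be discarded from a set without altering whether it has nonzero $\mathcal{A}$-density, and that a set of nonzero $\mathcal{A}$-density is necessarily infinite; both follow from subadditivity of $\delta_{\mathcal{A}}$ and the fact that finite sets have $\mathcal{A}$-density zero (which holds because $\mathcal{A}$ is a nonnegative regular matrix, so $\lim_n a_{nk}=0$ for each $k$). The only place requiring a little care is the nesting $S_1\supset S_2\supset\cdots$, which is needed so that the single increasing index sequence $\{n_m\}$ works for all radii simultaneously; this is immediate from $\mathcal{N}_\nu(1/m)\supset\mathcal{N}_\nu(1/(m+1))$, which in turn follows from the definition of strong $t$-neighborhoods together with Theorem \ref{thm61}. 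I do not anticipate a genuine obstacle; the main thing to get right is phrasing the membership ``$B\notin\mathcal{I}_{\mathcal{A}}$'' consistently as ``$\delta_{\mathcal{A}}(B)$ is nonzero or undefined'' and using only subadditivity (not additivity) of $\delta_{\mathcal{A}}$ on the complement side.
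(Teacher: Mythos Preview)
Your proof is correct. The paper, however, does not argue the two inclusions directly: its entire proof is the one line ``The proof directly follows from Theorem~4.1 \cite{Se2}, by taking $\mathcal I=\mathcal I_{\mathcal A}$,'' i.e.\ it simply specializes the known strong $\mathcal I$-limit/$\mathcal I$-cluster point inclusion from the ideal-convergence setting. Your route is genuinely different in presentation: you give a self-contained argument using only monotonicity of $\delta_{\mathcal A}$ on nonnegative matrices (finite sets have $\mathcal A$-density~$0$, supersets of non-thin sets are non-thin) for the first inclusion, and a direct diagonal extraction along the radii $t=1/m$ for the second. What this buys you is independence from the cited reference and an explicit construction of the convergent subsequence; what the paper's approach buys is brevity, since the result is already established at the level of admissible ideals. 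One small expository remark: the nesting $S_1\supset S_2\supset\cdots$ that you highlight is not actually used in your construction---you only use that each $S_m$ is infinite and that the \emph{neighborhoods} $\mathcal N_\nu(1/m)$ are nested, which suffices for the tail estimate. This is harmless, but you could drop that comment.
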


\begin{proof}
The proof directly follows from Theorem 4.1 \cite{Se2}, by taking
$\mathcal I = \mathcal{I}_{\mathcal A}$.
\end{proof}

\begin{thm}\label{thm617}
Let $(X,\mathcal{F},\tau)$ be a PM space and
$x=\{x_k\}_{k\in\mathbb{N}}$ be a sequence in $X$. If
$st_\mathcal{A}^{\mathcal{F}}\mbox{-}\lim\limits_{k\rightarrow
\infty}x_k = \mu$, then
$\Lambda_x^{st}(\mathcal{A})_s^\mathcal{F}=\Gamma_x^{st}(\mathcal{A})_s^\mathcal{F}=\{\mu\}$.
\end{thm}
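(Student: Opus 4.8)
The plan is to prove the two set equalities by combining Theorem \ref{thm616} with the uniqueness of the strong $\mathcal{A}$-statistical limit (Theorem \ref{thm64}) and the subsequential characterisation of strong $\mathcal{A}$-statistical convergence (Theorem \ref{thm67}). First I would observe that it suffices to show $\Gamma_x^{st}(\mathcal{A})_s^\mathcal{F}=\{\mu\}$, since by Theorem \ref{thm616} we have $\Lambda_x^{st}(\mathcal{A})_s^\mathcal{F}\subset\Gamma_x^{st}(\mathcal{A})_s^\mathcal{F}$, and once we know $\Gamma_x^{st}(\mathcal{A})_s^\mathcal{F}=\{\mu\}$ it remains only to check $\mu\in\Lambda_x^{st}(\mathcal{A})_s^\mathcal{F}$ to force all three sets to coincide with $\{\mu\}$.

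Next I would show $\mu\in\Lambda_x^{st}(\mathcal{A})_s^\mathcal{F}$. Since $st_\mathcal{A}^{\mathcal{F}}\mbox{-}\lim_{k\rightarrow\infty}x_k=\mu$, Theorem \ref{thm67} gives a set $G=\{q_1<q_2<\cdots\}\subset\mathbb{N}$ with $\delta_\mathcal{A}(G)=1$ such that $\mathcal{F}\mbox{-}\lim_{k\rightarrow\infty}x_{q_k}=\mu$. Since $\delta_\mathcal{A}(G)=1\neq 0$, the subsequence $\{x_{q_k}\}$ is an $\mathcal{A}$-nonthin subsequence strongly converging to $\mu$, so $\mu$ is a strong $\mathcal{A}$-statistical limit point of $x$ by Definition \ref{def67} (with $\mathcal I=\mathcal{I}_{\mathcal A}$): the set $G\notin\mathcal{I}_{\mathcal A}$ because $\delta_\mathcal{A}(G)=1>0$.

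Then I would prove $\Gamma_x^{st}(\mathcal{A})_s^\mathcal{F}\subset\{\mu\}$. Suppose $\nu\in\Gamma_x^{st}(\mathcal{A})_s^\mathcal{F}$ with $\nu\neq\mu$. Then $\mathcal{F}_{\mu\nu}\neq\varepsilon_0$, so there is a $t>0$ with $d_L(\mathcal{F}_{\mu\nu},\varepsilon_0)=t$. By Note \ref{no61}, choose $\eta>0$ such that $d_L(\mathcal{F}_{pq},\varepsilon_0)<\eta$ and $d_L(\mathcal{F}_{qr},\varepsilon_0)<\eta$ imply $d_L(\mathcal{F}_{pr},\varepsilon_0)<t$. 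Strong $\mathcal{A}$-statistical convergence to $\mu$ gives $\delta_\mathcal{A}(\{k:d_L(\mathcal{F}_{x_k\mu},\varepsilon_0)\geq\eta\})=0$, i.e. $\delta_\mathcal{A}(\{k:x_k\in\mathcal{N}_\mu(\eta)\})=1$. On the other hand, if $x_k\in\mathcal{N}_\nu(\eta)$ and $x_k\in\mathcal{N}_\mu(\eta)$ simultaneously, then $d_L(\mathcal{F}_{x_k\mu},\varepsilon_0)<\eta$ and $d_L(\mathcal{F}_{x_k\nu},\varepsilon_0)<\eta$, whence $d_L(\mathcal{F}_{\mu\nu},\varepsilon_0)<t$, a contradiction. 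Hence $\{k:x_k\in\mathcal{N}_\nu(\eta)\}\subset\{k:x_k\notin\mathcal{N}_\mu(\eta)\}$, which has $\mathcal{A}$-density $0$; therefore $\{k:x_k\in\mathcal{N}_\nu(\eta)\}\in\mathcal{I}_{\mathcal A}$, contradicting $\nu\in\Gamma_x^{st}(\mathcal{A})_s^\mathcal{F}$ via Definition \ref{def68}. So $\nu=\mu$, and combining with the previous paragraph and Theorem \ref{thm616} yields $\Lambda_x^{st}(\mathcal{A})_s^\mathcal{F}=\Gamma_x^{st}(\mathcal{A})_s^\mathcal{F}=\{\mu\}$.

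I do not expect any serious obstacle here; the content is entirely analogous to the uniqueness argument in Theorem \ref{thm64}. The one point requiring a little care is the direction of the set inclusion in the cluster-point step — making sure one passes to complements correctly so that a density-one set of indices at which $x_k\in\mathcal{N}_\mu(\eta)$ forces the set of indices at which $x_k\in\mathcal{N}_\nu(\eta)$ to be $\mathcal{A}$-thin — but this is routine once the triangle-function estimate from Note \ref{no61} is in place. Alternatively, one could quote the corresponding ideal-theoretic result (Theorem 4.1 of \cite{Se2}) with $\mathcal I=\mathcal{I}_{\mathcal A}$, but the self-contained argument above is short enough to include directly.
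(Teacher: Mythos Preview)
Your proposal is correct and follows essentially the same approach as the paper: both arguments use the triangle-function estimate from Note~\ref{no61} to show that $\mathcal{N}_\mu(\eta)$ and $\mathcal{N}_\nu(\eta)$ are disjoint for a suitable $\eta$, and then pass to complements to force the set $\{k:x_k\in\mathcal{N}_\nu(\eta)\}$ to have $\mathcal{A}$-density zero. The only cosmetic difference is that the paper cites Theorem~\ref{thm68} rather than Theorem~\ref{thm67} to obtain $\mu\in\Lambda_x^{st}(\mathcal{A})_s^\mathcal{F}$, but since Theorem~\ref{thm68} is itself proved via Theorem~\ref{thm67}, this is the same idea.
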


\begin{proof}
Let $st_\mathcal{A}^{\mathcal{F}}\mbox{-}\lim\limits_{k\rightarrow
\infty}x_k = \mu$. So for every $t>0$,
$\delta_\mathcal{A}(\{k\in\mathbb{N}:x_k\in
\mathcal{N}_\mu(t)\})=1$. Therefore,
$\mu\in\Gamma_x^{st}(\mathcal{A})_s^\mathcal{F}$. If possible,
suppose there exists an
$\alpha\in\Gamma_x^{st}(\mathcal{A})_s^\mathcal{F}$ such that
$\alpha\neq\mu$. Then $\mathcal{F}_{\alpha\mu}\neq\varepsilon_0$.
Then there is a $t_1>0$ such that
$d_L(\mathcal{F}_{\alpha\mu},\varepsilon_0)=t_1$. Now for $t_1>0$,
there exists a $\gamma>0$ such that
$d_L(\mathcal{F}_{pq},\varepsilon_0)<\gamma$ and
$d_L(\mathcal{F}_{qr},\varepsilon_0)<\gamma $ imply that
$d_L(\mathcal{F}_{pr},\varepsilon_0)<t_1$. Since,
$\alpha\in\Gamma_x^{st}(\mathcal{A})_s^\mathcal{F}$, for that
$\gamma>0$, $\delta_\mathcal{A}(\mathcal{M})\neq 0$, where
$\mathcal{M}=\{k\in\mathbb{N}: x_k\in
\mathcal{N}_\alpha(\gamma)\}$. Let $\mathcal{K}=\{k\in\mathbb{N}:
x_k\in \mathcal{N}_\mu(\gamma)\}$. As $\mu\neq\alpha$, so
$\mathcal{K}\cap \mathcal{M}=\emptyset$ and so $\mathcal{M}\subset
\mathcal{K}^c$. Since
$st_\mathcal{A}^{\mathcal{F}}\mbox{-}\lim\limits_{k\rightarrow
\infty}x_k= \mu$ so $\delta_\mathcal{A}(\mathcal{K}^c)=0$. Hence
$\delta_\mathcal{A}(\mathcal{M})=0$, which is a contradiction.
Therefore, $\Gamma_x^{st}(\mathcal{A})_s^\mathcal{F}=\{\mu\}$.

Again. as
$st_\mathcal{A}^{\mathcal{F}}\mbox{-}\lim\limits_{k\rightarrow
\infty}x_k= \mu$, so by Theorem \ref{thm68}, we have
$\mu\in\Lambda_x^{st}(\mathcal{A})_s^\mathcal{F}$. Then by Theorem
\ref{thm616}, we get
$\Lambda_x^{st}(\mathcal{A})_s^\mathcal{F}=\Gamma_x^{st}(\mathcal{A})_s^\mathcal{F}=\{\mu\}$.
\end{proof}
\begin{thm}\label{thm618}
Let $(X,\mathcal{F},\tau)$ be a PM space,
$x=\{x_k\}_{k\in\mathbb{N}}$ and $y=\{y_k\}_{k\in\mathbb{N}}$ be
two sequences in $X$ such that $\delta_\mathcal{A}(\{k\in
\mathbb{N} : x_k \neq y_k\})=0$. Then
$\Lambda_x^{st}(\mathcal{A})_s^\mathcal{F}=\Lambda_y^{st}(\mathcal{A})_s^\mathcal{F}$
and $\Gamma_x^{st}(\mathcal{A})_s^\mathcal{F}=
\Gamma_y^{st}(\mathcal{A})_s^\mathcal{F}$.
\end{thm}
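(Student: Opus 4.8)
The plan is to set $E=\{k\in\mathbb{N}:x_k\neq y_k\}$, so that $\delta_\mathcal{A}(E)=0$ by hypothesis, and then exploit the fact that $x$ and $y$ agree off the $\mathcal{A}$-density zero set $E$. Two elementary set-theoretic facts will be used repeatedly: first, a subset of an $\mathcal{A}$-density zero set again has $\mathcal{A}$-density zero (immediate from the nonnegativity of $\mathcal{A}$, since $\sum_{k\in D}a_{nk}\le\sum_{k\in C}a_{nk}$); second, if $\delta_\mathcal{A}(A)\neq 0$ and $\delta_\mathcal{A}(E)=0$, then $\delta_\mathcal{A}(A\setminus E)\neq 0$ — for otherwise $A=(A\setminus E)\cup(A\cap E)$ would be a union of two $\mathcal{A}$-density zero sets and hence, by the additivity noted in Section~1, would have $\mathcal{A}$-density zero, contradicting $\delta_\mathcal{A}(A)\neq 0$. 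Combining these, if $D\subseteq C$ and $\delta_\mathcal{A}(D)\neq 0$, then $\delta_\mathcal{A}(C)\neq 0$.

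I would first treat the cluster points. Let $\nu\in\Gamma_x^{st}(\mathcal{A})_s^\mathcal{F}$ and fix $t>0$; put $A_t=\{k\in\mathbb{N}:x_k\in\mathcal{N}_\nu(t)\}$, so $\delta_\mathcal{A}(A_t)\neq 0$. Since $x_k=y_k$ for every $k\notin E$, we have $A_t\setminus E=\{k\in\mathbb{N}:y_k\in\mathcal{N}_\nu(t)\}\setminus E\subseteq\{k\in\mathbb{N}:y_k\in\mathcal{N}_\nu(t)\}$. By the facts above, $\delta_\mathcal{A}(A_t\setminus E)\neq 0$, hence $\delta_\mathcal{A}(\{k\in\mathbb{N}:y_k\in\mathcal{N}_\nu(t)\})\neq 0$. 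As $t>0$ was arbitrary, $\nu\in\Gamma_y^{st}(\mathcal{A})_s^\mathcal{F}$. Interchanging the roles of $x$ and $y$ (the hypothesis is symmetric) gives $\Gamma_x^{st}(\mathcal{A})_s^\mathcal{F}=\Gamma_y^{st}(\mathcal{A})_s^\mathcal{F}$.

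For the limit points, let $\zeta\in\Lambda_x^{st}(\mathcal{A})_s^\mathcal{F}$, so by Definition~\ref{def67} there is $B=\{b_1<b_2<\cdots\}\subseteq\mathbb{N}$ with $\delta_\mathcal{A}(B)\neq 0$ and $\{x_{b_k}\}_{k\in\mathbb{N}}\xrightarrow{\mathcal{F}}\zeta$. Set $B'=B\setminus E$; then $\delta_\mathcal{A}(B')\neq 0$ by the second fact above. Writing $B'=\{b'_1<b'_2<\cdots\}$, the sequence $\{x_{b'_k}\}_{k\in\mathbb{N}}$ is a subsequence of the strongly convergent sequence $\{x_{b_k}\}_{k\in\mathbb{N}}$, hence itself strongly converges to $\zeta$; and since each $b'_k\notin E$ we have $x_{b'_k}=y_{b'_k}$, so $\{y_{b'_k}\}_{k\in\mathbb{N}}\xrightarrow{\mathcal{F}}\zeta$. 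Thus $B'$ witnesses $\zeta\in\Lambda_y^{st}(\mathcal{A})_s^\mathcal{F}$, and by symmetry $\Lambda_x^{st}(\mathcal{A})_s^\mathcal{F}=\Lambda_y^{st}(\mathcal{A})_s^\mathcal{F}$.

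The only delicate point is the bookkeeping around the phrase ``$\delta_\mathcal{A}(\cdot)\neq 0$'', which by the paper's convention includes the case that the set fails to possess an $\mathcal{A}$-density; the two preliminary facts are stated and proved precisely so that this ambiguity causes no trouble (equivalently, one may observe that $\mathcal{I}_\mathcal{A}$ is an ideal and argue with hereditariness and additivity directly). Everything else — passing to a subsequence of a strongly convergent sequence, and the symmetry of the hypothesis — is routine.
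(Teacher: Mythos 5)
Your proposal is correct and follows essentially the same route as the paper: for cluster points you remove the exceptional set $E$ (the paper equivalently intersects with its complement $\mathcal{C}=\{k:x_k=y_k\}$) and use that a non-$\mathcal{A}$-thin set stays non-thin after deleting a density-zero set; for limit points you restrict the witnessing non-thin index set to its part outside $E$ and note the resulting subsequence of a strongly convergent sequence still converges. Your explicit justification of the step ``$\delta_\mathcal{A}(A)\neq 0$ and $\delta_\mathcal{A}(E)=0$ imply $\delta_\mathcal{A}(A\setminus E)\neq 0$'' is in fact slightly more careful than the paper's, which asserts the corresponding claim without comment.
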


\begin{proof}
Let $\nu \in \Gamma_x^{st}(\mathcal{A})_s^\mathcal{F}$ and $t>0$
be given. Let $\mathcal{C}=\{k\in\mathbb N: x_k= y_k\}$. Since
$\delta_\mathcal{A}(\mathcal{C})=1$, so
$\delta_\mathcal{A}(\{k\in\mathbb N:x_k\in
\mathcal{N}_\nu(t)\}\cap \mathcal{C})$ is not zero. This gives
$\delta_\mathcal{A}(\{k\in\mathbb{N}:y_k\in
\mathcal{N}_\nu(t)\})\neq 0$ and so $\nu \in
\Gamma_y^{st}(\mathcal{A})_s^\mathcal{F}$. Since $\nu \in
\Gamma_x^{st}(\mathcal{A})_s^\mathcal{F}$ is arbitrary, so
$\Gamma_x^{st}(\mathcal{A})_s^\mathcal{F}\subset
\Gamma_y^{st}(\mathcal{A})_s^\mathcal{F}$. By similar argument we
get $ \Gamma _x^{st}(\mathcal{A})_s^\mathcal{F} \supset
\Gamma_y^{st}(\mathcal{A})_s^\mathcal{F}$. Hence
$\Gamma_x^{st}(\mathcal{A})_s^\mathcal{F}=
\Gamma_y^{st}(\mathcal{A})_s^\mathcal{F}$.

Now let $\mu\in \Lambda_y^{st}(\mathcal{A})_s^\mathcal{F}$. Then
$y$ has an $\mathcal{A}$-nonthin subsequence $\{y_{k_j}\}_{j\in
\mathbb N}$ that strongly converges to $\mu$. Let
$\mathcal{M}=\{k_j \in \mathbb{N} : y_{k_j} =  x_{k_j}\}$. Since
$\delta_\mathcal{A}(\{k_j\in \mathbb{N} : y_{k_j}\neq
x_{k_j}\})=0$ and $\{y_{k_j}\}_{j\in \mathbb{N}}$ is an
$\mathcal{A}$-nonthin subsequence of $y$ so
$\delta_\mathcal{A}(\mathcal{M}) \neq 0$. Now using the set
$\mathcal{M}$ we get an $\mathcal{A}$-nonthin subsequence
$\{x_{k_j}\}_{\mathcal{M}}$ of $x$ that strongly converges to
$\mu$. Thus $\mu \in \Lambda_x^{st}(\mathcal{A})_s^\mathcal{F}$.
As $\mu \in \Lambda_y^{st}(\mathcal{A})_s^\mathcal{F}$ is
arbitrary, so $\Lambda_y^{st}(\mathcal{A})_s^\mathcal{F}\subset
\Lambda_x^{st}(\mathcal{A})_s^\mathcal{F}$. Similarly we have
$\Lambda_x^{st}(\mathcal{A})_s^\mathcal{F}\subset \Lambda
_y^{st}(\mathcal{A})_s^\mathcal{F}$. Therefore
$\Lambda_x^{st}(\mathcal{A})_s^\mathcal{F}=
\Lambda_y^{st}(\mathcal{A})_s^\mathcal{F}$.
\end{proof}

\begin{thm}\label{thm619}
Let $(X,\mathcal{F},\tau)$ be a PM space and
$x=\{x_k\}_{k\in\mathbb{N}}$ be a sequence in $X$. Then the set
$\Gamma_x^{st}(\mathcal{A})_s^\mathcal{F}$ is a strongly closed
set.
\end{thm}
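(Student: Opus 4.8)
The plan is to show that the complement of $\Gamma_x^{st}(\mathcal{A})_s^\mathcal{F}$ in $X$ is strongly open. So let $\nu \in X \setminus \Gamma_x^{st}(\mathcal{A})_s^\mathcal{F}$. By Definition \ref{def68} (with $\mathcal I = \mathcal{I}_{\mathcal A}$), there exists some $t>0$ such that $\delta_\mathcal{A}(\{k\in\mathbb{N}: x_k\in\mathcal{N}_\nu(t)\})=0$. The goal is to produce a strong $t'$-neighborhood $\mathcal{N}_\nu(t')$ of $\nu$ that misses $\Gamma_x^{st}(\mathcal{A})_s^\mathcal{F}$ entirely, i.e. to show that every point $\eta$ sufficiently strongly close to $\nu$ also fails to be a strong $\mathcal{A}$-statistical cluster point.

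The key step is a triangle-function argument using Note \ref{no61}. Given the $t>0$ above, choose $\alpha>0$ (which we may assume satisfies $\alpha \le t$) so that $d_L(\mathcal{F}_{pq},\varepsilon_0)<\alpha$ and $d_L(\mathcal{F}_{qr},\varepsilon_0)<\alpha$ imply $d_L(\mathcal{F}_{pr},\varepsilon_0)<t$. I claim $\mathcal{N}_\nu(\alpha) \cap \Gamma_x^{st}(\mathcal{A})_s^\mathcal{F} = \emptyset$. Indeed, suppose $\eta \in \mathcal{N}_\nu(\alpha)$, so $d_L(\mathcal{F}_{\nu\eta},\varepsilon_0)<\alpha$. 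For any $k$ with $x_k \in \mathcal{N}_\eta(\alpha)$, i.e. $d_L(\mathcal{F}_{\eta x_k},\varepsilon_0)<\alpha$, the choice of $\alpha$ gives $d_L(\mathcal{F}_{\nu x_k},\varepsilon_0)<t$, that is $x_k \in \mathcal{N}_\nu(t)$. Hence $\{k\in\mathbb{N}: x_k\in\mathcal{N}_\eta(\alpha)\} \subset \{k\in\mathbb{N}: x_k\in\mathcal{N}_\nu(t)\}$, and since the latter has $\mathcal{A}$-density zero and $\mathcal{I}_{\mathcal A}$ is hereditary, so does the former. Therefore $\eta \notin \Gamma_x^{st}(\mathcal{A})_s^\mathcal{F}$ by Definition \ref{def68}. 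This shows $\mathcal{N}_\nu(\alpha) \subset X \setminus \Gamma_x^{st}(\mathcal{A})_s^\mathcal{F}$, so the complement is strongly open and $\Gamma_x^{st}(\mathcal{A})_s^\mathcal{F}$ is strongly closed.

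The main obstacle is mostly bookkeeping: making sure the symmetry $\mathcal{F}_{\nu\eta} = \mathcal{F}_{\eta\nu}$ (axiom (P-3)) is invoked where needed so that Note \ref{no61} applies with the right arguments, and being careful that the strong neighborhoods are genuinely of the form in the definition of the strong topology, so that $\mathcal{N}_\nu(\alpha)$ is actually an open neighborhood of $\nu$ (which it is, since $\nu \in \mathcal{N}_\nu(\alpha)$ because $\mathcal{F}_{\nu\nu} = \varepsilon_0$ and $d_L(\varepsilon_0,\varepsilon_0)=0<\alpha$). No genuinely hard analytic point arises; the result is essentially the standard fact that the cluster-point set of a filter-type convergence is closed, transported to the PM setting via the uniform-continuity-type property of $\tau$.
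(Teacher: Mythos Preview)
Your proof is correct. The paper itself does not give a direct argument here: it simply invokes Theorem~4.2 of \cite{Se2} (the strong $\mathcal{I}$-cluster point set is strongly closed) and specializes to $\mathcal{I}=\mathcal{I}_{\mathcal{A}}$. What you have written is a self-contained version of precisely the argument underlying that cited result---showing the complement is strongly open via the vicinity property of Note~\ref{no61}---so the approaches are not genuinely different in substance, only in presentation: yours is explicit, the paper's is by citation.
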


\begin{proof}
The proof directly follows from Theorem 4.2 \cite{Se2}, by taking
$\mathcal I = \mathcal{I}_{\mathcal A}$.
\end{proof}
\begin{thm}\label{thm620}
Let $(X,\mathcal{F},\tau)$ be a PM space and
$x=\{x_k\}_{k\in\mathbb{N}}$ be a sequence in $X$. Let
$\mathcal{C}$ be a strongly compact subset of $X$ such that
$\mathcal{C}\cap\Gamma_x^{st}(\mathcal{A})_s^\mathcal{F}=\emptyset$.
Then $\delta_\mathcal{A}(\mathcal{M})=0$, where
$\mathcal{M}=\{k\in\mathbb{N}:x_k\in \mathcal{C}\}$.
\end{thm}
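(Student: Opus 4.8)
The plan is to run a covering argument: cover $\mathcal{C}$ by strong neighbourhoods coming from the ``non-cluster-point'' condition at each of its points, pass to a finite subcover by strong compactness, and then conclude by finite additivity of the family of $\mathcal{A}$-density zero sets.

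First I would fix an arbitrary $c\in\mathcal{C}$. Since $\mathcal{C}\cap\Gamma_x^{st}(\mathcal{A})_s^\mathcal{F}=\emptyset$, the point $c$ is not a strong $\mathcal{A}$-statistical cluster point of $x$, so by Definition \ref{def68} applied with $\mathcal I=\mathcal{I}_{\mathcal A}$ there is some $t_c>0$ with
$$\delta_\mathcal{A}\bigl(\{k\in\mathbb{N}:x_k\in\mathcal{N}_c(t_c)\}\bigr)=0.$$
Because $\mathcal{N}_c(t_c)$ is a strong neighbourhood of $c$, there is a strongly open set $U_c$ with $c\in U_c\subseteq\mathcal{N}_c(t_c)$ (e.g. the strong interior of $\mathcal{N}_c(t_c)$). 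Then $\{U_c:c\in\mathcal{C}\}$ is a strong open cover of $\mathcal{C}$, and since $\mathcal{C}$ is strongly compact there are finitely many points $c_1,\dots,c_m\in\mathcal{C}$ with $\mathcal{C}\subseteq\bigcup_{i=1}^{m}U_{c_i}\subseteq\bigcup_{i=1}^{m}\mathcal{N}_{c_i}(t_{c_i})$.

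Next I would translate this back to index sets: if $k\in\mathcal{M}$, i.e. $x_k\in\mathcal{C}$, then $x_k\in\mathcal{N}_{c_i}(t_{c_i})$ for some $i$, so
$$\mathcal{M}\subseteq\bigcup_{i=1}^{m}\bigl\{k\in\mathbb{N}:x_k\in\mathcal{N}_{c_i}(t_{c_i})\bigr\}.$$
Each of the finitely many sets on the right-hand side has $\mathcal{A}$-density zero by the choice of $t_{c_i}$, and since the class of subsets of $\mathbb{N}$ with $\mathcal{A}$-density zero is closed under finite unions, the union on the right has $\mathcal{A}$-density zero. Finally, as $\mathcal{A}=(a_{nk})$ is nonnegative, $0\le\sum_{k\in\mathcal{M}}a_{nk}\le\sum_{k\in\bigcup_i\{\cdots\}}a_{nk}\to 0$, whence $\delta_\mathcal{A}(\mathcal{M})$ exists and equals $0$.

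I do not expect a genuine obstacle here; the argument is a standard compactness-plus-density computation. The only point deserving a line of care is that the strong $t$-neighbourhoods $\mathcal{N}_c(t)$ need not be strongly open a priori, which is why I would extract a strongly open $U_c$ with $c\in U_c\subseteq\mathcal{N}_c(t_c)$ before invoking strong compactness.
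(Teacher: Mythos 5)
Your argument is correct and is essentially the paper's own proof: cover $\mathcal{C}$ by the neighbourhoods $\mathcal{N}_\beta(t(\beta))$ whose index sets have $\mathcal{A}$-density zero, extract a finite subcover by strong compactness, and conclude since a finite union of $\mathcal{A}$-density-zero sets has $\mathcal{A}$-density zero. Your extra care in replacing $\mathcal{N}_c(t_c)$ by a strongly open $U_c$ is a harmless refinement (the paper simply treats the strong neighbourhoods as open), not a different route.
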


\begin{proof}
As
$\mathcal{C}\cap\Gamma_x^{st}(\mathcal{A})_s^\mathcal{F}=\emptyset$,
so for all $\beta\in \mathcal{C}$, there exists a  $t=t(\beta)>0$
so that $\delta_\mathcal{A}(\{k\in\mathbb{N}:x_k\in
\mathcal{N}_\beta(t)\})=0$. Then the family of strongly open sets
$\mathcal{Q}=\{\mathcal{N}_\beta(t):\beta\in \mathcal{C}\}$ forms
a strong open cover of $\mathcal{C}$. As $\mathcal{C}$ is a
strongly compact set, so there exists a finite subcover
$\{\mathcal{N}_{\beta_1}(t_1),
\mathcal{N}_{\beta_2}(t_2),...,\mathcal{N}_{\beta_m}(t_m)\}$ of
the strong open cover $\mathcal{Q}$. Then
$\mathcal{C}\subset\bigcup\limits_{j=1}^m\mathcal{N}_{\beta_j}(t_j)$
and also for each $j=1,2,...,m$ we have
$\delta_\mathcal{A}(\{k\in\mathbb{N}:x_k\in
\mathcal{N}_{\beta_j}(t_j)\})=0$.

Now since $\mathcal{A}$ is a nonnegative regular summability
matrix so there exists an $N_0\in \mathbb{N}$ such that for each
$n\geq N_0$, we get
$$\sum\limits_{x_k\in \mathcal{C}}a_{nk}\leq\sum\limits_{j=1}^m\sum\limits_{x_k\in \mathcal{N}_{\beta_j}(t_j)}a_{nk}.$$ Then we have,\\
\begin{eqnarray*}
&&~~~\lim\limits_{n\rightarrow\infty}\sum\limits_{x_k\in
\mathcal{C}}a_{nk}\leq\sum\limits_{j=1}^m\lim\limits_{n\rightarrow\infty}
\sum\limits_{x_k\in \mathcal{N}_{\beta_j}(t_j)}a_{nk}=0.\\
\end{eqnarray*}
This gives $\delta_\mathcal{A}(\{k\in\mathbb{N}:x_k\in
\mathcal{C}\})=0$.
\end{proof}
\begin{thm}\label{thm621}
Let $(X,\mathcal{F},\tau)$ be a PM space and
$x=\{x_k\}_{k\in\mathbb{N}}$ be a sequence in $X$. If $x$ has a
strongly bounded $\mathcal{A}$-nonthin subsequence  then the set
$\Gamma_x^{st}(\mathcal{A})_s^\mathcal{F}$ is nonempty and
strongly closed.
\end{thm}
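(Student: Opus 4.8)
The plan is to treat the two assertions separately. The strong closedness of $\Gamma_x^{st}(\mathcal{A})_s^\mathcal{F}$ is immediate: it is exactly the content of Theorem \ref{thm619}, which holds for every sequence in a PM space and requires no hypothesis on $x$. So the whole weight of the statement lies in showing that $\Gamma_x^{st}(\mathcal{A})_s^\mathcal{F}$ is nonempty, and here I would argue by contradiction, exploiting Theorem \ref{thm620}.

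First I would fix the data: let $\{x_{k_j}\}_{j\in\mathbb{N}}$ be a strongly bounded $\mathcal{A}$-nonthin subsequence of $x$, say with index set $\mathcal{Q}=\{k_1<k_2<\cdots\}$, so that $\delta_\mathcal{A}(\mathcal{Q})\neq 0$, and let $\mathcal{C}$ be a strongly compact subset of $X$ with $x_{k_j}\in\mathcal{C}$ for all $j$ (such a $\mathcal{C}$ exists by the definition of strong boundedness). Suppose, towards a contradiction, that $\Gamma_x^{st}(\mathcal{A})_s^\mathcal{F}=\emptyset$. Then trivially $\mathcal{C}\cap\Gamma_x^{st}(\mathcal{A})_s^\mathcal{F}=\emptyset$, so Theorem \ref{thm620} applies and yields $\delta_\mathcal{A}(\{k\in\mathbb{N}:x_k\in\mathcal{C}\})=0$.

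Next I would observe that $\mathcal{Q}\subset\{k\in\mathbb{N}:x_k\in\mathcal{C}\}$, since $x_{k_j}\in\mathcal{C}$ for every $j$. Because $\mathcal{A}=(a_{nk})$ is nonnegative, for every $n$ we have $0\leq\sum_{k\in\mathcal{Q}}a_{nk}\leq\sum_{x_k\in\mathcal{C}}a_{nk}$, and letting $n\rightarrow\infty$ the right-hand side tends to $0$; hence $\delta_\mathcal{A}(\mathcal{Q})=0$. This contradicts the fact that $\mathcal{Q}$ is $\mathcal{A}$-nonthin (recall that $\mathcal{A}$-nonthin means that $\delta_\mathcal{A}(\mathcal{Q})$ is either a positive number or fails to exist, so in particular $\delta_\mathcal{A}(\mathcal{Q})\neq 0$). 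Therefore $\Gamma_x^{st}(\mathcal{A})_s^\mathcal{F}\neq\emptyset$, and together with the closedness from Theorem \ref{thm619} this completes the argument.

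As for difficulty, there is no serious obstacle here, since both of the needed ingredients (closedness of the cluster set, and the $\mathcal{A}$-density statement for strongly compact sets disjoint from the cluster set) have already been established. The only point worth a moment's care is the meaning of ``$\mathcal{A}$-nonthin'': one must invoke the fact that a subset of an $\mathcal{A}$-density zero set again has $\mathcal{A}$-density zero, so that the subsequence being $\mathcal{A}$-nonthin genuinely prevents $\mathcal{Q}\subset\{k\in\mathbb{N}:x_k\in\mathcal{C}\}$ from having $\mathcal{A}$-density zero.
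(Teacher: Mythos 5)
Your proof is correct and follows essentially the same route as the paper: closedness is quoted from Theorem \ref{thm619}, and nonemptiness is obtained by contradiction via Theorem \ref{thm620} applied to the compact set containing the nonthin subsequence. Your handling of the final step (using $\mathcal{Q}\subset\{k\in\mathbb{N}:x_k\in\mathcal{C}\}$ and nonnegativity of $\mathcal{A}$ to conclude $\delta_\mathcal{A}(\mathcal{Q})=0$) is if anything slightly cleaner than the paper's cardinality comparison.
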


\begin{proof}
Let $\{x\}_\mathcal{M}$ be a strongly bounded
$\mathcal{A}$-nonthin subsequence of $x$. Then
$\delta_\mathcal{A}(\mathcal{M})\neq 0$ and there exists a
strongly compact subset $\mathcal{C}$ of $X$ such that $x_k\in
\mathcal{C}$ for all $k\in \mathcal{M}$. If
$\Gamma_x^{st}(\mathcal{A})_s^\mathcal{F}=\emptyset$, then
$\mathcal{C}\cap\Gamma_x^{st}(\mathcal{A})_s^\mathcal{F}=\emptyset$
and then by Theorem \ref{thm620}, we get
$\delta_\mathcal{A}(\{k\in\mathbb{N}:x_k\in \mathcal{C}\})=0$. But
$\left|\{k\in \mathbb{N}:k\in\mathcal{M}\}\right|\leq
\left|\{k\in\mathbb{N} :x_k\in \mathcal{C}\}\right|$, which gives
$\delta_\mathcal{A}(\mathcal{M})=0$, which contradicts our
assumption. Hence $\Gamma_x^{st}(\mathcal{A})_s^\mathcal{F}$ is
nonempty and by Theorem \ref{thm619},
$\Gamma_x^{st}(\mathcal{A})_s^\mathcal{F}$ is strongly closed.
\end{proof}
\begin{defn}\label{def69}
Let $(X,\mathcal{F},\tau)$ be a PM space and
$x=\{x_k\}_{k\in\mathbb{N}}$ be a sequence in $X$. Then $x$ is
said to be strongly $\mathcal{A}$-statistically bounded if there
exists a strongly compact subset $\mathcal{C}$ of $X$ such that
$\delta_\mathcal{A}(\{k\in\mathbb{N}:x_k\notin \mathcal{C}\})=0$.
\end{defn}

\begin{note}
If $\mathcal A$ be the Cesaro matrix $C_1$, the the notions of
strong ${C_1}$-statistical boundedness in a PM space becomes
strong statistical boundedness \cite{Du1}.
\end{note}

\begin{thm}\label{thm622}
Let $(X,\mathcal{F},\tau)$ be a PM space and
$x=\{x_k\}_{k\in\mathbb{N}}$ be a sequence in $X$. If $x$ is
strongly $\mathcal{A}$-statistically bounded, then the set
$\Gamma_x^{st}(\mathcal{A})_s^\mathcal{F}$ is nonempty and
strongly compact.
\end{thm}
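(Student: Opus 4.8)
The plan is to get nonemptiness and strong closedness for free from Theorem \ref{thm621}, and then to pin the strong $\mathcal{A}$-statistical cluster point set inside the compact ``witness'' set $\mathcal{C}$ coming from the boundedness hypothesis.

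First I would unpack the hypothesis: there is a strongly compact $\mathcal{C}\subset X$ with $\delta_\mathcal{A}(\{k\in\mathbb{N}:x_k\notin\mathcal{C}\})=0$, hence $\delta_\mathcal{A}(\mathcal{M})=1\neq 0$ for $\mathcal{M}=\{k\in\mathbb{N}:x_k\in\mathcal{C}\}$. Thus $\{x\}_\mathcal{M}$ is a strongly bounded $\mathcal{A}$-nonthin subsequence of $x$, and Theorem \ref{thm621} immediately yields that $\Gamma_x^{st}(\mathcal{A})_s^\mathcal{F}$ is nonempty and strongly closed.

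The main (and only slightly delicate) step is to show $\Gamma_x^{st}(\mathcal{A})_s^\mathcal{F}\subset\mathcal{C}$. Since $\tau$ is continuous, the strong topology is Hausdorff, so the strongly compact set $\mathcal{C}$ is strongly closed, i.e. $k(\mathcal{C})=\mathcal{C}$. I would argue by contradiction: if $\nu\notin\mathcal{C}=k(\mathcal{C})$, then by the explicit description of strong closure there is a $t>0$ with $\mathcal{N}_\nu(t)\cap\mathcal{C}=\emptyset$. Consequently $\{k\in\mathbb{N}:x_k\in\mathcal{N}_\nu(t)\}\subset\{k\in\mathbb{N}:x_k\notin\mathcal{C}\}$, so $\delta_\mathcal{A}(\{k\in\mathbb{N}:x_k\in\mathcal{N}_\nu(t)\})=0$, which contradicts the defining property of a strong $\mathcal{A}$-statistical cluster point. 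Hence $\nu\in\mathcal{C}$, proving the containment.

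Finally I would combine the pieces: $\Gamma_x^{st}(\mathcal{A})_s^\mathcal{F}$ is a strongly closed subset of the strongly compact set $\mathcal{C}$, so by Theorem \ref{thm62} it is strongly compact, and by the first step it is nonempty. The only point requiring care is the containment argument, where one must read off from the formula for $k(\mathcal{C})$ that a point outside the strongly closed set $\mathcal{C}$ admits a strong $t$-neighbourhood disjoint from $\mathcal{C}$; everything else is routine bookkeeping with $\mathcal{A}$-density and the subset monotonicity of $\delta_\mathcal{A}$ on density-zero sets.
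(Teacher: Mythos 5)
Your proposal is correct and follows essentially the same route as the paper's proof: invoke Theorem \ref{thm621} for nonemptiness and strong closedness, show $\Gamma_x^{st}(\mathcal{A})_s^\mathcal{F}\subset\mathcal{C}$ by producing a strong $t$-neighbourhood of any point outside $\mathcal{C}$ disjoint from $\mathcal{C}$ and deriving a density contradiction, then apply Theorem \ref{thm62}. Your justification of the disjoint neighbourhood (compact implies closed in the Hausdorff strong topology, then read off the formula for $k(\mathcal{C})$) is in fact spelled out more carefully than the paper's, which simply asserts it from strong compactness.
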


\begin{proof}
Let $\mathcal{C}$ be a strongly compact set with
$\delta_\mathcal{A}(\mathcal{V})=0$, where
$\mathcal{V}=\{k\in\mathbb{N}:x_k\notin \mathcal{C}\}$. Then
$\delta_\mathcal{A}(\mathcal{V}^c)=1\neq 0$ and so $\mathcal{C}$
contains a bounded $\mathcal{A}$- nonthin subsequence of $x$. So
by Theorem \ref{thm621},
$\Gamma_x^{st}(\mathcal{A})_s^\mathcal{F}$ is nonempty and
strongly closed. We now prove that
$\Gamma_x^{st}(\mathcal{A})_s^\mathcal{F}$ is strongly compact.
For this, we only show that
$\Gamma_x^{st}(\mathcal{A})_s^\mathcal{F}\subset C$. If possible,
let $\alpha\in \Gamma_x^{st}(\mathcal{A})_s^\mathcal{F}\setminus
\mathcal{C}$. As $\mathcal{C}$ is strongly compact, so there is a
$q>0$ such that $\mathcal{N}_\alpha(q)\cap \mathcal{C}=\emptyset$.
So we get
$\{k\in\mathbb{N}:x_k\in\mathcal{N}_\alpha(q)\}\subset\{k\in\mathbb{N}:x_k\notin
\mathcal{C}\}$, which implies
$\delta_\mathcal{A}(\{k\in\mathbb{N}:x_k\in\mathcal{N}_\alpha(q)\})=0$,
a contradiction to our assumption that
$\alpha\in\Gamma_x^{st}(\mathcal{A})_s^\mathcal{F}$. So,
$\Gamma_x^{st}(\mathcal{A})_s^\mathcal{F}\subset \mathcal{C}$.
Therefore $\Gamma_x^{st}(\mathcal{A})_s^\mathcal{F}$ is nonempty
and strongly compact.
\end{proof}

\section{\textbf{Strong statistical $\mathcal{A}$-summability in PM spaces}}

In this section, following the line of Connor \cite{Co5} we
introduce the notions of strong $\mathcal{A}$-summable sequences
and strong statistical $\mathcal{A}$-summable sequences in PM
spaces.

\begin{defn}\label{def64}
Let $(X,\mathcal{F},\tau)$ be a PM space and $x=\{x_k\}_{k \in\mathbb{N}}$ be a sequence
in $X$. Then $x$ is said to be strongly $\mathcal{A}$-summable to $\mathcal{L} \in X$, if for
every $t>0$, there exists a natural number $k_0$ such that
$$\sum\limits_{k=1}^\infty a_{jk}d_L(\mathcal{F}_{x_k\mathcal{L}},\varepsilon_0)< t,\hspace{0.5in} \text{whenever}~ j\geq k_0.$$
In this case we write, $\lim\limits_{j\rightarrow
\infty}\sum\limits_{k=1}^\infty
a_{jk}d_L(\mathcal{F}_{x_k\mathcal{L}},\varepsilon_0)=0$.
\end{defn}

\begin{thm}\label{thm612}
Let $(X,\mathcal{F},\tau)$ be a PM space and $x=\{x_k\}_{k
\in\mathbb{N}}$ be a sequence in $X$. If $x$ is strongly
$\mathcal{A}$-summable to $\mathcal{L}\in X$, then $x$ is strongly
$\mathcal{A}$-statistically convergent to $\mathcal{L}$.
\end{thm}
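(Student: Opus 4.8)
The plan is to reduce everything to the density characterization of strong $\mathcal{A}$-statistical convergence recorded in Remark \ref{rem61}. So it suffices to show that for each $t>0$ the set $B(t)=\{k\in\mathbb{N}: d_L(\mathcal{F}_{x_k\mathcal{L}},\varepsilon_0)\geq t\}$ has $\delta_\mathcal{A}(B(t))=0$. The bridge from summability to statistical convergence is a Chebyshev-type inequality. First I would note that the series $\sum_{k} a_{jk}\,d_L(\mathcal{F}_{x_k\mathcal{L}},\varepsilon_0)$ is well defined for each $j$, since $0\le d_L\le 1$ and $\|\mathcal{A}\|=\sup_j\sum_k |a_{jk}|<\infty$ by the Silverman--Toeplitz conditions. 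Then, because every $a_{jk}\geq 0$ and every summand is nonnegative, restricting the sum to indices in $B(t)$ and using $d_L(\mathcal{F}_{x_k\mathcal{L}},\varepsilon_0)\geq t$ there gives
$$\sum_{k=1}^{\infty} a_{jk}\,d_L(\mathcal{F}_{x_k\mathcal{L}},\varepsilon_0)\ \geq\ \sum_{k\in B(t)} a_{jk}\,d_L(\mathcal{F}_{x_k\mathcal{L}},\varepsilon_0)\ \geq\ t\sum_{k\in B(t)} a_{jk}.$$

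Next I would invoke the hypothesis. Strong $\mathcal{A}$-summability to $\mathcal{L}$, read through Theorem \ref{thm61} (which identifies $\mathcal{F}_{x_k\mathcal{L}}(t)>1-t$ with $d_L(\mathcal{F}_{x_k\mathcal{L}},\varepsilon_0)<t$), is precisely the statement $\lim_{j\to\infty}\sum_{k=1}^{\infty} a_{jk}\,d_L(\mathcal{F}_{x_k\mathcal{L}},\varepsilon_0)=0$ from Definition \ref{def64}. Dividing the displayed inequality by $t$ and letting $j\to\infty$ forces $\lim_{j\to\infty}\sum_{k\in B(t)} a_{jk}=0$, i.e. $\delta_\mathcal{A}(B(t))=0$. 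Since $t>0$ was arbitrary, Remark \ref{rem61} gives $st_\mathcal{A}^{\mathcal{F}}\mbox{-}\lim_{k\to\infty}x_k=\mathcal{L}$, which is the claim.

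There is essentially no deep obstacle here; the argument is the probabilistic-metric analogue of the classical fact that strong $\mathcal{A}$-summability implies $\mathcal{A}$-statistical convergence, and the only points needing a moment's care are (a) the well-definedness of the series noted above, and (b) making sure Definition \ref{def64} is being used in its ``$\lim=0$'' form rather than its $\varepsilon$--$k_0$ form, the two being trivially equivalent. It is also worth remarking that the converse fails in general, so the implication is genuinely one-directional, but that observation is not needed for the proof.
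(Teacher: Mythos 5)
Your proposal is correct and follows essentially the same route as the paper: both arguments rest on the Chebyshev-type bound $\sum_{k=1}^{\infty} a_{jk}\,d_L(\mathcal{F}_{x_k\mathcal{L}},\varepsilon_0)\geq t\sum_{k\in B(t)} a_{jk}$ obtained by restricting the sum to $B(t)$, and then let $j\to\infty$ to force $\delta_\mathcal{A}(B(t))=0$. Your added remarks on the well-definedness of the series and the equivalence of the two forms of Definition \ref{def64} are harmless refinements of the same argument.
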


\begin{proof}
Let $x$ be strongly $\mathcal{A}$-summable to $\mathcal{L}\in X$.
Then $\lim\limits_{j\rightarrow \infty}\sum\limits_{k=1}^\infty
a_{jk}d_L(\mathcal{F}_{x_k\mathcal{L}},\varepsilon_0)=0$. Let
$t>0$ be given. Then for each $j\in\mathbb{N}$,
\begin{eqnarray*}
&~&~~~\sum\limits_{k=1}^\infty a_{jk}d_L(\mathcal{F}_{x_k\mathcal{L}},\varepsilon_0)\\
&=&~\sum\limits_{\stackrel{\stackrel{k\in \mathbb{N}}{d_L(\mathcal{F}_{x_k\mathcal{L}},\varepsilon_0)
\geq t}}~} a_{jk}d_L(\mathcal{F}_{x_k\mathcal{L}},\varepsilon_0)+\sum\limits_{\stackrel{\stackrel{k\in \mathbb{N}}
{d_L(\mathcal{F}_{x_k\mathcal{L}},\varepsilon_0)< t}}~} a_{jk}d_L(\mathcal{F}_{x_k\mathcal{L}},\varepsilon_0)\\
&\geq &~\sum\limits_{\stackrel{\stackrel{k\in
\mathbb{N}}{d_L(\mathcal{F}_{x_k\mathcal{L}},\varepsilon_0)\geq
t}}~} a_{jk}d_L(\mathcal{F}_{x_k\mathcal{L}},\varepsilon_0) \geq
t \sum\limits_{\stackrel{\stackrel{k\in
\mathbb{N}}{d_L(\mathcal{F}_{x_k\mathcal{L}},\varepsilon_0)\geq
t}}~} a_{jk}.
\end{eqnarray*}
Therefore, $\lim\limits_{j\rightarrow
\infty}\sum\limits_{\stackrel{\stackrel{k\in \mathbb{N}}
{d_L(\mathcal{F}_{x_k\mathcal{L}},\varepsilon_0)\geq t}}~}
a_{jk}=0$. So $x$ is strongly $\mathcal{A}$-statistically
convergent to $\mathcal L$.
\end{proof}

\begin{cor}\label{cor62}
Let $(X,\mathcal{F},\tau)$ be a PM space and $x=\{x_k\}_{k
\in\mathbb{N}}$ be a sequence in $X$. If $x$ is strongly
$\mathcal{A}$-summable to $\mathcal{L}\in X$, then $x$ has a
subsequence which is strongly convergent to $\mathcal{L}$.
\end{cor}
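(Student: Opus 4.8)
The plan is to combine Theorem~\ref{thm612} with Theorem~\ref{thm67}. Since $x$ is strongly $\mathcal{A}$-summable to $\mathcal{L}$, Theorem~\ref{thm612} tells us that $x$ is strongly $\mathcal{A}$-statistically convergent to $\mathcal{L}$, i.e.\ $st_\mathcal{A}^{\mathcal{F}}\mbox{-}\lim\limits_{k\rightarrow\infty}x_k=\mathcal{L}$.

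Next I would invoke Theorem~\ref{thm67}, which asserts that strong $\mathcal{A}$-statistical convergence of $x$ to $\mathcal{L}$ is equivalent to the existence of a subset $G=\{q_1<q_2<\ldots\}\subset\mathbb{N}$ with $\delta_\mathcal{A}(G)=1$ such that $\mathcal{F}\mbox{-}\lim\limits_{k\rightarrow\infty}x_{q_k}=\mathcal{L}$. Since $\delta_\mathcal{A}(G)=1$, the set $G$ is in particular infinite, so $\{x_{q_k}\}_{k\in\mathbb{N}}$ is a genuine subsequence of $x$, and it is strongly convergent to $\mathcal{L}$ by construction. This yields the desired subsequence.

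There is essentially no obstacle here; the corollary is an immediate consequence of the two cited theorems, and the only point worth noting explicitly is that $\delta_\mathcal{A}(G)=1$ forces $G$ to be infinite (otherwise, by admissibility of $\mathcal{I}_\mathcal{A}$, a finite set has $\mathcal{A}$-density zero, contradicting $\delta_\mathcal{A}(G)=1$ since $\delta_\mathcal{A}(\mathbb{N})=1\neq 0$), so that the index set $\{q_k\}$ indeed defines a subsequence in the usual sense. I would write the proof in two short sentences citing Theorem~\ref{thm612} and Theorem~\ref{thm67} in turn.
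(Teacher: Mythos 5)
Your proposal is correct and matches the paper's proof exactly: the paper also derives the corollary by combining Theorem~\ref{thm612} with Theorem~\ref{thm67}. The extra remark that $\delta_\mathcal{A}(G)=1$ forces $G$ to be infinite is a sensible detail the paper leaves implicit.
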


\begin{proof}
Directly follows from  Theorem 5.1 and Theorem \ref{thm67}.
\end{proof}

\begin{defn}\label{def65}
Let $(X,\mathcal{F},\tau)$ be a PM space and $x=\{x_k\}_{k \in\mathbb{N}}$ be a sequence in $X$. Then $x$ is
said to be strongly statistically $\mathcal{A}$-summable to $\mathcal{L} \in X$, if for every $t>0$,
$$d(\{j\in \mathbb{N}: \sum\limits_{k=1}^\infty a_{jk}d_L(\mathcal{F}_{x_k\mathcal{L}},\varepsilon_0)\geq t\})=0.$$
In this case we write,
$st$-$\lim\limits_{j\rightarrow \infty}\sum\limits_{k=1}^\infty a_{jk}d_L(\mathcal{F}_{x_k\mathcal{L}},\varepsilon_0)=0$.
\end{defn}

\begin{thm}\label{thm613}
Let $(X,\mathcal{F},\tau)$ be a PM space and $x=\{x_k\}_{k
\in\mathbb{N}}$ be a sequence in $X$. If $x$ is strongly
$\mathcal{A}$-summable to $\mathcal{L}\in X$, then it is strongly
statistically $\mathcal{A}$-summable to $\mathcal{L}$ also.
\end{thm}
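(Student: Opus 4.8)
The plan is to reduce the statement to the elementary fact that an ordinarily convergent real sequence is statistically convergent to the same limit. Write $s_j = \sum_{k=1}^{\infty} a_{jk}\, d_L(\mathcal{F}_{x_k\mathcal{L}},\varepsilon_0)$ for each $j\in\mathbb{N}$. By Definition \ref{def64}, the hypothesis that $x$ is strongly $\mathcal{A}$-summable to $\mathcal{L}$ says precisely that $\lim_{j\rightarrow\infty} s_j = 0$.

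Now fix $t>0$. From $\lim_{j\rightarrow\infty}s_j = 0$ there exists a natural number $k_0 = k_0(t)$ such that $s_j < t$ for every $j \geq k_0$. Hence the set $A_t := \{\, j\in\mathbb{N} : s_j \geq t \,\}$ satisfies $A_t \subseteq \{1,2,\dots,k_0-1\}$, so $A_t$ is finite. A finite subset of $\mathbb{N}$ has natural density zero, so $d(A_t) = 0$, i.e.
$$d\Bigl(\bigl\{\, j\in\mathbb{N} : \sum\nolimits_{k=1}^{\infty} a_{jk}\, d_L(\mathcal{F}_{x_k\mathcal{L}},\varepsilon_0) \geq t \,\bigr\}\Bigr) = 0.$$
Since $t>0$ was arbitrary, by Definition \ref{def65} the sequence $x$ is strongly statistically $\mathcal{A}$-summable to $\mathcal{L}$, that is, $st\text{-}\lim_{j\rightarrow\infty} s_j = 0$.

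There is essentially no obstacle here: the whole content is the containment $A_t \subseteq \{1,\dots,k_0-1\}$ coming from ordinary convergence, together with the triviality that finite sets are density-null. The only points to state carefully are that the quantifier order is "for each $t>0$ there is $k_0(t)$" (not uniform), and that the density in Definition \ref{def65} is the ordinary natural density $d$ applied to the sequence $\{s_j\}$, not an $\mathcal{A}$-density — so the argument is exactly the classical one and does not require regularity of $\mathcal{A}$ beyond what is already used to make the $s_j$ well defined.
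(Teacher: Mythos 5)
Your proof is correct and is exactly the standard argument the paper has in mind (the paper omits the proof as "very easy"): ordinary convergence of the row sums $s_j$ to $0$ forces $\{j : s_j \geq t\}$ to be finite, hence of natural density zero. Nothing further is needed.
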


\begin{proof}
The proof is very easy, so omitted.
\end{proof}

\begin{thm}\label{thm614}
Let $(X,\mathcal{F},\tau)$ be a PM space and $x=\{x_k\}_{k
\in\mathbb{N}}$ be a sequence in $X$. If $x$ is strongly
$\mathcal{A}$-statistically convergent to $\mathcal{L}\in X$, then
$x$ is strongly $\mathcal{A}$-summable to $\mathcal{L}$ and hence
strongly statistically $\mathcal{A}$-summable to $\mathcal{L}$.
\end{thm}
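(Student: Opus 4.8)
The plan is to reduce the assertion to a standard fact about bounded real sequences, exploiting the crucial circumstance that the metric $d_L$ on $\mathcal{D}^+$ is bounded by $1$ (by its very definition, $d_L(f,g)$ is an infimum of numbers in $(0,1]$, so $0\le d_L(f,g)\le 1$ for all $f,g\in\mathcal{D}^+$).

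First I would put $u_k=d_L(\mathcal{F}_{x_k\mathcal{L}},\varepsilon_0)$, so that $0\le u_k\le 1$ for every $k$. By Remark \ref{rem61}, the hypothesis $st_\mathcal{A}^\mathcal{F}\mbox{-}\lim_{k\to\infty}x_k=\mathcal{L}$ is equivalent to $st_\mathcal{A}\mbox{-}\lim_{k\to\infty}u_k=0$, i.e. $\delta_\mathcal{A}(B(t))=0$ for every $t>0$, where $B(t)=\{k\in\mathbb{N}:u_k\ge t\}$. The goal is to show $\lim_{j\to\infty}\sum_{k=1}^\infty a_{jk}u_k=0$, which is exactly strong $\mathcal{A}$-summability of $x$ to $\mathcal{L}$.

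Next, fix $t>0$ and set $\varepsilon=t/4$. For each $j$ I would split
$$\sum_{k=1}^\infty a_{jk}u_k=\sum_{k\in B(\varepsilon)}a_{jk}u_k+\sum_{k\notin B(\varepsilon)}a_{jk}u_k.$$
In the first sum I use the bound $u_k\le 1$ to get $\sum_{k\in B(\varepsilon)}a_{jk}u_k\le\sum_{k\in B(\varepsilon)}a_{jk}$, which tends to $\delta_\mathcal{A}(B(\varepsilon))=0$ as $j\to\infty$. In the second sum I use $u_k<\varepsilon$ together with $a_{jk}\ge 0$ to get $\sum_{k\notin B(\varepsilon)}a_{jk}u_k\le\varepsilon\sum_{k=1}^\infty a_{jk}$; by the regularity condition $\sum_k a_{jk}\to 1$, so this term is eventually less than $2\varepsilon=t/2$. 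Hence there is $N_0\in\mathbb{N}$ with $\sum_{k=1}^\infty a_{jk}u_k<t$ for all $j\ge N_0$; as $t>0$ is arbitrary, $\lim_{j\to\infty}\sum_{k=1}^\infty a_{jk}d_L(\mathcal{F}_{x_k\mathcal{L}},\varepsilon_0)=0$, i.e. $x$ is strongly $\mathcal{A}$-summable to $\mathcal{L}$. The "hence" part then follows at once from Theorem \ref{thm613}.

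The only genuinely delicate point — and the one that makes this theorem hold at all, in contrast with the scalar situation where $\mathcal{A}$-statistical convergence does not force $\mathcal{A}$-summability without an extra boundedness hypothesis — is the uniform bound $d_L\le 1$ on $\mathcal{D}^+$; I would be careful to invoke it explicitly when estimating the sum over $B(\varepsilon)$. Everything else is the routine two-piece splitting above.
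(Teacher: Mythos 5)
Your proof is correct and follows essentially the same route as the paper's: both split $\sum_k a_{jk}\,d_L(\mathcal{F}_{x_k\mathcal{L}},\varepsilon_0)$ into the part over $\{k: d_L(\mathcal{F}_{x_k\mathcal{L}},\varepsilon_0)\ge t\}$ (controlled by the uniform bound $d_L\le 1$ and the vanishing $\mathcal{A}$-density of that set) and the complementary part (controlled by $t\sum_k a_{jk}$ and regularity). The only cosmetic difference is that you make the role of the bound $d_L\le 1$ and the final $\varepsilon$-bookkeeping more explicit than the paper does.
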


\begin{proof}
Since $X$ is a PM space, so for all $p,q\in X$ we have
$d_L(\mathcal{F}_{pq},\varepsilon_0)\leq 1$. Now let $x$ be
strongly $\mathcal{A}$-statistically convergent to $\mathcal{L}\in
X$. Let $t>0$ and
$\mathcal{K}=\{k\in\mathbb{N}:d_L(\mathcal{F}_{x_k\mathcal{L}},\varepsilon_0)\geq
t\}$. For each $j\in\mathbb{N}$,
\begin{eqnarray*}
&~&~~~\sum\limits_{k=1}^\infty a_{jk}d_L(\mathcal{F}_{x_k\mathcal{L}},\varepsilon_0)\\
&=&~\sum\limits_{k\in \mathcal{K}} a_{jk}d_L(\mathcal{F}_{x_k\mathcal{L}},\varepsilon_0)+
\sum\limits_{k\notin \mathcal{K}} a_{jk}d_L(\mathcal{F}_{x_k\mathcal{L}},\varepsilon_0)\\
&\leq &~\sum\limits_{k\in \mathcal{K}}
a_{jk}d_L(\mathcal{F}_{x_k\mathcal{L}},\varepsilon_0)+t\sum\limits_{k\notin
\mathcal{K}} a_{jk}
\\
&\leq &~ \sum\limits_{\stackrel{\stackrel{k\in \mathcal{K}}{d_L(\mathcal{F}_{x_k\mathcal{L}},\varepsilon_0)\geq t}}~} a_{jk}+
 t\sum\limits_{k\notin \mathcal{K}} a_{jk}.
\end{eqnarray*}
Now since $x$ is strongly $\mathcal{A}$-statistically convergent
to $\mathcal{L}$ and $\mathcal{A}$ is a nonnegative regular
summability matrix, so
$\lim\limits_{j\rightarrow\infty}\sum\limits_{k=1}^\infty
a_{jk}d_L(\mathcal{F}_{x_k\mathcal{L}},\varepsilon_0)=0$.
Therefore $x$ is strongly $\mathcal{A}$-summable to $\mathcal{L}$
and hence by Theorem \ref{thm613}, $x$ is strongly statistically
$\mathcal{A}$-summable to $\mathcal{L}$.
\end{proof}

\noindent\textbf{Acknowledgment:} The second author is grateful to
Council of Scientific and Industrial Research, India for his
fellowships funding under CSIR-JRF scheme.
\\

\end{document}